% -*- mode: latex; TeX-PDF-mode: t; -*-
\documentclass[12pt]{article}

\setlength{\textwidth}{6.5in}
\setlength{\textheight}{9in}
\setlength{\topmargin}{-0.5in}
\setlength{\oddsidemargin}{0.5in}
\setlength{\evensidemargin}{0.5in}

\usepackage{diagbox}
\usepackage{mathtools}
\usepackage{bbm}
\usepackage{latexsym}
\usepackage{epsfig}
\usepackage{amsmath,amsthm,amssymb,enumerate}

\usepackage[a-1b]{pdfx}
\usepackage{hyperref}
\usepackage[normalem]{ulem}

\parindent 0in
\parskip 2.5ex
\usepackage{color}

\addtolength{\textwidth}{1.2in} \addtolength{\oddsidemargin}{-1in}
\addtolength{\evensidemargin}{-0.5in}
\addtolength{\topmargin}{-0.5in} \addtolength{\textheight}{1in}
%\addtolength{\footheight}{-1 in}

%\newcommand{\stack}[2]{\genfrac{}{}{0pt}{}{#1}{#2}}

\def\cH{{\mathcal H}}
% Greek letters
\def\nn{\nonumber}
\def\a{\alpha} \def\b{\beta}  
\def\e{\varepsilon} \def\f{\phi}   \def\g{\gamma}
\def\G{\Gamma} \def\i{\iota} \def\k{\kappa}
     
 \def\m{\mu}  \def\p{\pi}
  \def\s{\sigma} 
 \def\om{\omega}

\def\va{\boldsymbol \a}
\def\vb{\boldsymbol \b}

%Layouts
\newtheorem{theorem}{Theorem}
\newtheorem{lemma}[theorem]{Lemma}

\newtheorem{Remark}{Remark}

%%%%%%%%%%%%%%%%%%%%%%math stuff%%%%%%%%%%%%%%%%

\newcommand{\wh}[1]{\widehat{#1}}

\newcommand{\brac}[1]{\left(#1\right)}

\newcommand{\bfrac}[2]{\left(\frac{#1}{#2}\right)}

\newcommand{\set}[1]{\left\{#1\right\}}

\def\E{\mathbb{E}}

\def\Pr{\mathbb{P}}

%%%%%%%%%%%%%%%%%%%%%%%%%%%%%%%%%%%%%%%%%%%%%%%%%%%%%%%%%%%%%%%%%%%%%%%%
%specific to this paper
\newcommand{\ignore}[1]{}

\def\cG{{\mathcal G}}
\def\cH{{\mathcal H}}

\newcommand{\card}[1]{\left|#1\right|}

\newcommand{\beq}[2]{\begin{equation}\label{#1}#2\end{equation}}
\newcommand{\mults}[1]{\begin{multline*}#1\end{multline*}}

\def\nn{\nonumber}

\def\cG{\mathcal{G}}

\def\bc{{\bf c}}
\newcommand{\bi}{\boldsymbol\sigma}

\usepackage{tikz}
\usetikzlibrary{decorations.pathmorphing}
\usetikzlibrary{positioning}
\usetikzlibrary{arrows,automata}
\usetikzlibrary{shapes.misc}
\usetikzlibrary{backgrounds}
\usetikzlibrary{arrows,shapes}

\newcommand{\upp}[1]{\langle#1\rangle}

\begin{document}
\author{Alan Frieze\thanks{Research supported in part by NSF grant DMS1952285} and Wesley Pegden\thanks{Research supported in part by NSF grant DMS1700365}\\Department of Mathematical Sciences\\Carnegie Mellon University\\Pittsburgh PA 15213}

\date{}
\title{Sequentially constrained Hamilton cycles in random graphs}
\maketitle
\begin{abstract}
We discuss the existence of Hamilton cycles in the random graph $G_{n,p}$ where there are restrictions caused by (i) coloring sequences, (ii) a subset of vertices must occur in a specific order and (iii) there is a bound on the number of inversions in the associated permutation.
\end{abstract}
\section{Introduction}
\subsection{Randomly colored random graphs}
In this paper we consider several questions related to Hamilton cycles in random graphs. Our first set of questions arise from randomly coloring the edges or vertices. Suppose we are given a graph $G=(V,E)$, $k$ colors $1,2,\ldots,k=O(1)$ and a map $c:E\to [k]$. A color pattern will be a sequence $\bc=(c_1,c_2,\ldots,c_n)$. Our first result concerns edge colored copies of $G_{n,p}$. Given a sequence \bc\ we say that the Hamilton cycle $H=(x_1,x_2,\ldots,x_n,x_1)$ (as a sequence of vertices) is \bc-colored if $c(\set{x_i,x_{i+1}})=c_i$ for $i=1,2,\ldots,n$. 

Suppose that $\va=(\a_1,\a_2,\ldots,\a_k)$ where $\a_1,\ldots,\a_k$ are constants and $\a_1+\cdots+\a_k=1$ and $\a_i>0,i=1,2,\ldots,k$. Let $\b=\min\set{\a_i:i\in[k]}^{-1}$ and let $G_{n,p;\va}$ denote the random graph $G_{n,p}$ where each edge is independently given a random color $i$ from the {\em palette} $[k]$ with probability $\a_i$.
\begin{theorem}\label{th1}
Let \bc\ be an arbitrary sequence of colors. Let $p=(\log n+\om)/n$ where $\om\to\infty$. Then w.h.p. $G_{n,\b p;\va}$ contains a \bc-colored Hamilton cycle.
\end{theorem}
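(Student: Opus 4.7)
The natural first step is a first moment calculation. Let $X$ denote the number of ordered Hamilton cycles $(x_1,\ldots,x_n,x_1)$ in $G_{n,\beta p;\va}$ with $c(\{x_i,x_{i+1}\})=c_i$ for all $i$. Because the $n$ edges of a Hamilton cycle are distinct, the $n$ color events are independent, each with probability $\beta p\,\alpha_{c_i}$. Writing $n_j=|\{i:c_i=j\}|$,
$$\E[X]=n!\,(\beta p)^n\prod_{j=1}^k \alpha_j^{n_j}.$$
Since $\beta\alpha_j\geq 1$ for every $j$ by definition of $\beta$, we have $\prod_j(\beta\alpha_j)^{n_j}\geq 1$, hence $\E[X]\geq n!\,p^n\to\infty$ super-polynomially for $p=(\log n+\omega)/n$.

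To obtain concentration, I would proceed via a rotation-extension argument decoupled across colors. Couple $G_{n,\beta p;\va}$ with independent random graphs $G^{(1)},\ldots,G^{(k)}$, where $G^{(i)}\sim G_{n,q_i}$ for suitable $q_i\geq p$, chosen so that assigning each edge the smallest index $i$ with $e\in G^{(i)}$ reproduces the law of $G_{n,\beta p;\va}$. Each $G^{(i)}$ is then at the Hamiltonicity threshold, so w.h.p.\ Hamilton-connected and possessing strong expansion. Break $\bc$ into maximal monochromatic runs $R_1,\ldots,R_m$ of colors $d_1,\ldots,d_m$ and lengths $\ell_1,\ldots,\ell_m$. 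A $\bc$-colored Hamilton cycle is then equivalent to a choice of ``link'' vertices $v_0,v_1,\ldots,v_{m-1}$, a partition $V=V_1\sqcup\cdots\sqcup V_m$ with $|V_t|=\ell_t$, and, for each $t$ (cyclically), a path of length $\ell_t$ in $G^{(d_t)}$ from $v_{t-1}$ to $v_t$ using $V_t\cup\{v_{t-1}\}$. The independence of the $G^{(i)}$'s lets these sub-problems be addressed in parallel.

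The main obstacle is handling short runs (say $\ell_t=O(1)$) simultaneously with long ones. For short runs the path is essentially a prescribed edge (an edge-level constraint); for long runs one invokes Hamilton-connectedness of $G^{(d_t)}$ on an induced subgraph, which requires robustness of the random graph on prescribed vertex sets. A two-stage construction should handle this: first use a Hall-type matching argument, leaning on the $\Omega(\log n)$ minimum-degree property of each $G^{(i)}$, to fix the short-run edges and the link vertices; then use Hamilton-connectedness of each $G^{(d_t)}$ to fill in the long runs on the remaining vertex allocation. As a backup, a direct second moment on $X$ is also plausible: for a pair of Hamilton cycles $H_1,H_2$, any shared edge $e\in E(H_1)\cap E(H_2)$ must satisfy the color-matching constraint $c_{p_1(e)}=c_{p_2(e)}$, which substantially reduces the contribution of large-intersection pairs, the usual obstruction at this threshold.
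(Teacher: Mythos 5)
The paper proves Theorem~\ref{th1} by a McDiarmid-style coupling that sidesteps the structure of $\bc$ entirely: one interpolates between the uncolored $G_{n,p}$ and the colored $G_{n,\beta p;\va}$ one edge at a time, and shows that the probability of containing a ``$(\bc,t)$-proper'' Hamilton cycle is monotone along the interpolation. Since $G_{n,p}$ is Hamiltonian w.h.p.\ at $p=(\log n+\omega)/n$, the colored endpoint inherits the $\bc$-colored Hamilton cycle w.h.p. The key inequality in the coupling is exactly where $\beta$ enters: flipping an uncolored edge of probability $p$ into a colored edge of probability $\beta p$ can only help, because the probability of getting the one required color is $\beta p\,\alpha_i\geq p$. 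Your proposal takes an entirely different, constructive route and, as written, has a genuine gap.

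The gap is the short-run case, which you flag but do not resolve, and it is not a corner case --- it is where the whole difficulty lives. Take two colors and the strictly alternating pattern $\bc=(1,2,1,2,\ldots)$. Then every run has length $1$, your $V_t$'s are all singletons, and there is nothing for Hamilton-connectedness of $G^{(d_t)}$ to do. The ``Hall-type matching argument'' you propose would produce a perfect matching $M_1$ in color $1$ and a perfect matching $M_2$ in color $2$, but $M_1\cup M_2$ is generically a union of many even cycles, not a single Hamilton cycle; forcing it to be one cycle is a genuinely different (and harder) problem than bipartite matching, and nothing in the two-stage scheme addresses it. A secondary issue: in your coupling, the color-$i$ subgraph is $G^{(i)}\setminus\bigcup_{j<i}G^{(j)}$, not $G^{(i)}$, so the color classes are \emph{disjoint}, not independent, and the ``parallelism'' claim needs to be reformulated around a fixed vertex partition $V_1,\ldots,V_m$ --- which then has to be union-bounded over roughly $m^n$ choices, another obstacle your sketch doesn't engage. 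Finally, the second-moment ``backup'' is not a safe fallback at $p\sim\log n/n$: even for ordinary Hamilton cycles the plain second moment fails at this density because the variance is driven by low-degree vertices, which is precisely why one resorts to P\'osa rotations. The first-moment computation at the start is correct but does not carry weight in either approach.
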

\begin{Remark}
In the above theorem we are allowed to take $c_i=\ell,i=1,2,\ldots,n$ for each possible $\ell\in [k]$ and so we cannot improve the $\b p$ probability threshold. This is because each subgraph induced by a single color must itself be Hamiltonian.
\end{Remark}
\begin{Remark}
As will be seen, the proof of Theorem \ref{th1} can be repeated verbatim for the random digraphs $D_{n,,p}$ and $D_{n,\b p;\va}$.
\end{Remark}
\begin{Remark}
The proof can also be extended without difficulty to deal with Hamilton cycles in edge colored hypergraphs. Here we must let $p$ be the threshold probability for a particular type of Hamilton cycle. These thresholds are known fairly precsely for all except loose Hamilton cycles. See Frieze \cite{F1}, Dudek, Frieze, Loh and Speiss \cite{DFLS} for loose Hamilton cycle thresholds and Dudek and Frieze \cite{DF} and Narayanan and  Schacht \cite{NS} for the remaining types.
\end{Remark}
One can also consider problems where the vertices are colored. Here our results are less tight. Suppose now that there are $k$ colors and each $v\in [n]$ is given a color $c(v)\in[k]$. Let $V_i=\set{v:c(v)=i}$ and assume that $|V_i|=\b_in$ for $i\in[k]$ where $\vb=(\b_1,\b_2,\ldots,\b_k)$ and $\b_1+\b_2+\cdots+\b_k=1$ and $\b_i>0,i\in[k]$ so that each set $V_i$ is of linear size. We denote this randomly colored graph by $G_{n,p}^{\vb}$. We can assume w.l.o.g. that vertices $1,2,\ldots,\b_1n$ are given color 1 and vertices $\b_1n+1,\b_1n+2,\ldots,(\b_1+\b_2)n$ are given color 2 etc. Given a sequence \bc\ we now say that the Hamilton cycle $H=(x_1,x_2,\ldots,x_n,x_1)$ (as a sequence of vertices) is \bc-colored if $c(x_i)=c_i$ for $i=1,2,\ldots,n$. 
\begin{theorem}\label{th2}
Let \bc\ be an arbitrary sequence of colors where each color $j$ appears exactly $\a_jn$ times. Let $p=K\log n/n$ where $K=K(\vb)$ is sufficiently large. Then w.h.p. $G_{n,p}^{\vb}$ contains a \bc-colored Hamilton cycle.
\end{theorem}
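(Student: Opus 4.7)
The plan is to reduce the problem to finding vertex-disjoint paths inside each color class glued together by bridge edges. Parse $\bc$ cyclically into maximal monochromatic runs $R_1,\ldots,R_m$ of colors $d_t\in[k]$ and lengths $\ell_t\ge 1$ (so $d_t\ne d_{t+1}$ and $\sum_t\ell_t=n$). A $\bc$-colored Hamilton cycle of $G=G_{n,p}^{\vb}$ is equivalent to a collection of vertex-disjoint paths $Q_1,\ldots,Q_m$ with $Q_t\subseteq G[V_{d_t}]$ on $\ell_t$ vertices, jointly covering $V$, together with bridge edges $e_t\in E(G)$ from the terminal vertex of $Q_t$ to the initial vertex of $Q_{t+1}$.

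With $K=K(\vb)$ chosen sufficiently large, I would first establish w.h.p.\ properties of $G$, all standard at density $p=K\log n/n$ via Chernoff bounds and P\'osa rotation/extension: (A) for every vertex $u$ and color $j$, $|N(u)\cap V_j|=(1+o(1))K\b_j\log n$; (B) each induced subgraph $G[V_j]\sim G(\b_j n,K\log n/n)$ is \emph{strongly Hamilton-linked} in the sense that given any collection of pairwise disjoint endpoint pairs in $V_j$ together with any compatible length profile summing to $|V_j|$, vertex-disjoint paths realizing the pairs with the prescribed lengths exist and cover $V_j$; (C) any two subsets $A\subseteq V_i$, $B\subseteq V_j$ with $i\ne j$ and $|A|,|B|\ge \log^3 n$ span at least $\log^2 n$ edges of $G$.

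Given these, the construction splits in two phases. In Phase I, select the bridges $e_t$: greedily process $t=1,\ldots,m$, and at each step use (C) to pick $e_t=\{x_t,y_{t+1}\}$ with $x_t\in V_{d_t}$ and $y_{t+1}\in V_{d_{t+1}}$, avoiding the previously chosen bridge endpoints within each color class (only $O(m)=O(n)$ endpoints are ever forbidden, which (C) can easily accommodate). In Phase II, color by color, invoke (B) with the endpoint pairs produced in Phase I for the runs of color $j$ and the prescribed run lengths to obtain vertex-disjoint paths $Q_t$ filling $V_j$. Concatenating the $Q_t$ along the bridges $e_t$ yields the desired $\bc$-colored Hamilton cycle.

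The main obstacle is the regime in which $m$ is close to $n$, i.e.\ most runs have length $1$ or $2$: Phase II becomes nearly trivial, but Phase I must then select essentially the entire Hamilton cycle out of bridges with many global consistency constraints (each length-$1$ run of color $j$ consumes two edges at the same vertex, and runs of length $2$ are also tightly constrained), and naive greedy selection need not succeed. I would handle this by fixing a slowly growing threshold $L=\omega(1)$ and treating runs of length $\le L$ collectively via an auxiliary $k$-partite random graph on the set of short runs whose edges encode valid bridges, where a P\'osa-style rotation argument using (C) as expansion input produces a compatible cyclic bridge structure; longer runs are then filled in via (B). Unifying the short-run and long-run regimes into a single coherent rotation/extension argument is the crux of the proof.
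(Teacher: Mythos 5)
Your approach is genuinely different from the paper's, but it is not a complete proof: the ``crux'' you identify at the end is a real and substantial gap, and one of your stated properties is actually false.

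The paper proves Theorems \ref{th2} and \ref{th2a} together by a short, non-constructive application of the Frankston--Kahn--Narayanan--Park spread theorem (Theorem \ref{th3a}, with the rainbow version Theorem \ref{thrainbow} for \ref{th2a}). It takes $X=\binom{[n]}{2}$, lets $\cH$ be the edge-sets of $\bc$-colored Hamilton cycles, and shows $\cH$ is $\Omega(n)$-spread by a direct count: for any edge set $S$ of size $s$ consisting of vertex-disjoint paths, it bounds the number of $\bc$-colored Hamilton cycles containing $S$ by roughly $(2e/(\a_{\min}n))^s\,|\cH|$. No run decomposition, no bridges, no rotation/extension. Your proposed constructive route -- decompose $\bc$ into maximal monochromatic runs, find paths inside color classes, link by bridges -- is a legitimately different strategy, and if completed would have the advantage of being explicit and potentially extendable to hitting-time results, which the spread lemma does not give.

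However, there are two concrete problems. First, your Property (B), ``strongly Hamilton-linked for any compatible length profile,'' is false for $G(\b_j n, K\log n/n)$: a prescribed path of length $2$ between specified endpoints $u,v$ requires the edge $\{u,v\}$, present only with probability $p=o(1)$, and a prescribed path of length $1$ forces the two runs on either side to bridge to the same vertex, a constraint your Phase I ignores. Hamilton-connectedness--type statements in sparse random graphs require the prescribed paths to have length $\Omega(\log n/\log\log n)$ at the very least. Second, since the adversary chooses $\bc$, the ``most runs have length $1$ or $2$'' regime is not a corner case but the entire difficulty of the problem (the all-runs-length-$1$ case is a $\bc$-patterned Hamiltonicity problem in a random $k$-partite graph). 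Your remedy -- a slowly growing threshold $L$, an auxiliary $k$-partite graph on short runs, and a P\'osa-type rotation argument -- is the right place to look, but it is not supplied; you acknowledge it is ``the crux,'' which means the proposal stops precisely where the proof would have to begin. Until that rotation/extension argument on the auxiliary structure is actually carried out and shown to interleave consistently with long runs (whose internal paths must terminate at endpoints compatible with the auxiliary structure), you do not have a proof.
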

We can expand our results by coloring the edges as well as the vertices. We prove two results along these lines. First, suppose that $q\geq n$ and we randomly color each edge with one of $q$ colors. A Hamilton cycle is rainbow colored if each edge has a different color. Using a result of Bell, Frieze and Marbach \cite{BFM} and Han and Yuan \cite{HY} we can strengthen Theorem \ref{th2} to
\begin{theorem}\label{th2a}
Let \bc\ be an arbitrary sequence of colors where each color $j$ appears exactly $\b_jn$ times. Let $p=K\log n/n$ where $K=K(\vb)$ is sufficiently large. Suppose in addition that the edges of $G_{n,p}^{\vb}$ are randomly colored with one of $q\geq n$ colors. Then w.h.p. $G_{n,p}^{\vb}$ contains a \bc-colored rainbow Hamilton cycle.
\end{theorem}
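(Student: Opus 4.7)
The plan is to combine the argument for Theorem \ref{th2} with the rainbow Hamiltonicity techniques of \cite{BFM, HY} via a two-round exposure of edges and colors. Split $p=p_1+p_2$ with $p_1=(K-1)\log n/n$ (still satisfying Theorem \ref{th2} for a larger $K$) and $p_2=\log n/n$, and set $G_i\sim G_{n,p_i}^{\vb}$ with independent random edge colorings from $[q]$. Expose $G_1$ and its colors completely, but hold the edges of $G_2$ and their colors in reserve. Applying Theorem \ref{th2} to $G_1$ yields a \bc-colored Hamilton cycle $H_0\subseteq G_1$. Since $q\geq n$, the expected number of color-clash pairs in $H_0$ is at most $\binom{n}{2}/q=O(n)$, and a routine concentration argument shows it is $O(n)$ w.h.p.

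The second phase resolves clashes one by one using local \bc-preserving modifications. The basic move is a swap of two indices $i<j$ with $c_i=c_j$: we exchange $x_i$ and $x_j$ in the cyclic order, which requires only that the four edges $\{x_{i-1},x_j\},\{x_j,x_{i+1}\},\{x_{j-1},x_i\},\{x_i,x_{j+1}\}$ exist in $G_1\cup G_2$. Because $x_i,x_j\in V_{c_i}$, the swapped cycle remains \bc-colored. To kill a clash between edges $e_1,e_2\in E(H_0)$ sharing color $\gamma$, we search for a swap that replaces $e_1$ with replacement edges whose colors are fresh with respect to the rest of $H_0$. Since a uniformly random color from $[q]$ avoids the $O(n)$ existing colors with probability at least a positive constant (as $q\geq n$), a constant fraction of the random candidate swaps in $G_2$ are both edge-realizable and color-safe; processing $O(n)$ clashes in sequence completes the conversion of $H_0$ into a rainbow \bc-colored Hamilton cycle.

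The chief obstacle is the \emph{swap abundance lemma}: for each edge $e\in E(H_0)$ we must exhibit $\Omega(n)$ candidate index-pairs $(i,j)$ (with $c_i=c_j$ and one of them incident to $e$) such that the required four-edge gadget exists in $G_1\cup G_2$ with constant probability. In ordinary Pósa rotations one is free to reorder vertices arbitrarily, but here the swap is rigid since we may only permute within a color class $V_r$. To recover enough flexibility, one should allow the more general operation of cyclically rotating $k$ vertices of a common color $V_r$ along a short alternating cycle of replacement edges; this gives $\Omega(n^{k-1})$ candidate operations per clash, far more than the $O(n)$ clashes to resolve. Verifying that $G_2$ supplies the alternating structure required for a positive fraction of these operations is precisely the color-constrained analogue of the rotation-abundance estimates in \cite{BFM,HY}, and the bulk of the work will be checking that their estimates survive conditioning on $G_1$ and on the partition $V_1,\dots,V_k$.
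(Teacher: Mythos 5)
Your proposal diverges from the paper's argument, and I believe there is a genuine quantitative gap in the swap step. The paper does not perform a two-round exposure followed by local repairs at all: it observes that the single spread estimate it proves for the hypergraph $\cH$ of $\bc$-colored Hamilton cycles (with $r=n$ and $\k=\a_{\min}n/2e$) simultaneously satisfies the hypotheses of Theorem~\ref{th3a} (giving Theorem~\ref{th2}) and of the rainbow version, Theorem~\ref{thrainbow} from \cite{BFM,HY} (giving Theorem~\ref{th2a}). So Theorem~\ref{th2a} falls out of the same spread computation with no further work; the ``rainbow'' part is entirely outsourced to the black-box Theorem~\ref{thrainbow}, which is \emph{not} a rotation-abundance estimate.

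The gap in your approach is the swap abundance lemma, and I do not think it is merely a technical obstacle to be verified --- it fails in the regime $p=K\log n/n$. A transposition of $x_i$ and $x_j$ requires the four edges $\{x_{i-1},x_j\},\{x_j,x_{i+1}\},\{x_{j-1},x_i\},\{x_i,x_{j+1}\}$ to be present. Fixing the clashing edge forces one of $i,j$, so there are $O(n)$ candidate partners $j$ in the relevant color class, and each gadget occurs with probability at most $p^4$ (even before imposing the color-freshness condition). The expected number of available transpositions per clash is therefore $O(n\cdot p^4)=O(\log^4 n/n^3)=o(1)$. Equivalently: the two $H_0$-neighbors $x_{i-1},x_{i+1}$ of $x_i$ have, in expectation, $np^2=O(\log^2 n/n)$ common neighbors besides $x_i$, so w.h.p.\ there is no legal partner at all. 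Passing to cyclic $k$-rotations does not help: you gain roughly $n^{k-1}$ candidate tuples but pay $p^{2k}$ per gadget, and $n^{k-1}p^{2k}=O(\log^{2k}n/n^{k+1})\to 0$ for every fixed $k$. So the ``constant fraction of candidate swaps is realizable'' claim cannot hold, and the repair phase stalls at the first clash. To make an absorption-style second phase work one would need a fundamentally different local structure than rigid intra-color swaps (which is exactly why the paper sidesteps this by invoking the rainbow spread theorem instead).
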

We can also consider a combination of Theorems \ref{th1} and \ref{th2}.
\begin{theorem}\label{1+2}
Let $\bc_1=(c_{1,1},c_{1,2},\ldots,c_{1,n})$ be an arbitrary sequence of colors from the palette $[k]$ and let $\bc_2=(c_{2,1},c_{2,2},\ldots,c_{2,n})$ be another arbitrary sequence of colors from the palette $[\ell]$ where each color $j\in [\ell]$ appears exactly $\b_jn$ times. Let $p=K\log n/n$ where $K$ is sufficiently large. Suppose that each edge of $G_{n,p}$ is given a random color from palette $[k]$, using distribution $\va$ and exactly $\b_jn=\Omega(n)$ vertices are given color $j$ for $j\in[\ell]$. Denote this coloring of $G_{n,p}$ by $G_{n,p;\va}^{\vb}$. Then w.h.p. $G_{n,p;\va}^{\vb}$ contains a Hamilton cycle in which the edges follow pattern $\bc_1$ and the vertices follow pattern $\bc_2$.
\end{theorem}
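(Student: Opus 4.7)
The plan is to adapt the proof of Theorem \ref{th2} by thinning the graph position by position according to the edge color pattern $\bc_1$. Let $\a_{\min}=\min_{c\in[k]}\a_c>0$. For each edge color $c\in[k]$, the subgraph $G^{(c)}$ of $G_{n,p;\va}^{\vb}$ consisting of all color-$c$ edges is distributed as $G_{n,\a_cp}$ carrying the same vertex coloring $\vb$. Thus for any pair of vertex-color classes $V_a,V_b$ and any $c\in[k]$, the bipartite (or unipartite) graph of color-$c$ edges between $V_a$ and $V_b$ is a random graph with edge probability $\a_cp\ge\a_{\min}K\log n/n$, and subgraphs arising from distinct pairs $(a,b)$ are mutually independent.

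For each cycle position $i\in[n]$ the edge $\{x_i,x_{i+1}\}$ of the sought Hamilton cycle must lie in $G^{(c_{1,i})}$ between $V_{c_{2,i}}$ and $V_{c_{2,i+1}}$. Assume the proof of Theorem \ref{th2} goes through whenever $p=K'\log n/n$ for $K'$ exceeding a threshold depending on $\vb$. I would mimic that proof, but whenever the argument calls for an edge at cycle position $i$, replace the relevant random bipartite graph on $V_{c_{2,i}}\times V_{c_{2,i+1}}$ by its color-$c_{1,i}$ subgraph. Each such restricted graph has density at least $\a_{\min}K\log n/n$, so choosing $K$ with $\a_{\min}K\ge K'$ makes every density-based estimate in the proof of Theorem \ref{th2} go through, with constants rescaled.

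The main obstacle is confirming that the Pósa-rotation or absorbing engine underlying Theorem \ref{th2} remains robust under this position-dependent thinning. The verifications reduce to standard Chernoff bounds: at any step, the number of admissible neighbors of a path endpoint corresponding to position $i$ is concentrated around $\a_{c_{1,i}}\b_{c_{2,i+1}}K\log n=\Theta(K\log n)$; the numbers of available booster edges and absorbing gadgets between appropriate class pairs are likewise concentrated around their expectations up to a constant factor; and the number of union-bound events is still polynomial in $n$. Since $K$ enters the proof of Theorem \ref{th2} only through such concentration estimates, taking $K$ sufficiently large in terms of $\vb$, $\va$, and $K'$ suffices, and the Hamilton cycle produced by the adapted construction realizes both patterns $\bc_1$ and $\bc_2$ simultaneously.
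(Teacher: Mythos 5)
Your plan takes a genuinely different route from the paper, and as written it has gaps. The paper proves Theorem~\ref{1+2} in a few lines by reusing the McDiarmid coupling from Section~\ref{sec1}: it interpolates between $\Gamma_0 = G_{n,p}^{\vb}$ (vertex-colored, no edge colors) and $\Gamma_N = G_{n,\b p;\va}^{\vb}$ by coloring one more potential edge at a time, shows the monotonicity $\Pr(\Gamma_t\in\cG_t)\leq\Pr(\Gamma_{t+1}\in\cG_{t+1})$ exactly as in Lemma~\ref{comp}, and then invokes Theorem~\ref{th2} for $\Gamma_0$ as a black box. Since $K$ is allowed to be large, the constant inflation from $\b p$ is harmless. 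No fresh density or expansion estimates are needed; the edge-color constraint is pushed into the coupling rather than into the Hamiltonicity argument.

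The main problem with your plan is that you have misidentified the mechanism behind Theorem~\ref{th2}. Its proof in this paper is not a P\'osa-rotation/absorption construction; it is an application of the Frankston--Kahn--Narayanan--Park spread theorem (Theorem~\ref{th3a}, and its rainbow variant Theorem~\ref{thrainbow}). That proof is non-constructive: there is no step that ``calls for an edge at cycle position $i$'' for you to thin. To realize your idea within the FKNP framework you would need to set up a \emph{new} spread hypergraph whose ground set is the colored edges $(e,c)\in\binom{[n]}{2}\times[k]$, whose edges are the $(\bc_1,\bc_2)$-compatible Hamilton cycles, and whose random subset model reflects the fact that the $(e,c)$ are not independent (each $e$ receives exactly one color). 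None of that is carried out, and it is not merely ``constants rescaled.'' A secondary issue is that thinning ``position by position'' presumes each edge of $G$ has a well-defined cycle position; it does not --- an edge $\{u,v\}$ is admissible at any position $i$ with $\{c(u),c(v)\}=\{c_{2,i},c_{2,i+1}\}$, and in a rotation-type argument positions change under rotation. So even granting a constructive proof of Theorem~\ref{th2}, the thinning step would have to be stated more carefully. In short, your approach is salvageable in principle via a new FKNP spread computation, but the paper's coupling is both shorter and complete, and your writeup currently leaves the essential verification unaddressed.
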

\subsubsection{Prior work on randomly colored random graphs}
\paragraph{Rainbow Hamilton Cycles} 
The most well-studied case is that of rainbow Hamilton cycles. Here we are given $k\geq n$ colors which are applied randomly to the edges of $G_{n,p}$. A rainbow Hamilton cycle is one where each edge has a different color. Cooper and Frieze \cite{CF} showed that if $k\geq 20n$ and $p\geq \frac{20\log n}{n}$ then a randomly colored $G_{n,p}$ contains a rainbow hamilton cycle w.h.p. This was improved to $k\geq n+o(n)$ and $p\sim \frac{\log n}{n}$ by Frieze and Loh. Currently the strongest result is that of Ferber and Krivelevich \cite{FK} who prove a hitting time result when $k=n+o(n)$.
\paragraph{Repeating Patterns} Special cases of Theorem \ref{th1} were proved by Espig, Frieze and Krivelevich \cite{EFK} and by Anastos and Frieze \cite{AF}. Here the sequence \bc\ is required to consist of the repetition of some fixed bounded length subsequence. In this case it was possible to prove hitting time results. Chakraborti, Frieze and Hasabnis \cite{CFH} proved a hitting time version for the existence of patterns where the Hamilton cycle is required to decompose into $k$ concatinated mono-chromatic paths.
\subsection{A fixed order for a subset of vertices} 
Here we consider the following problem. We have a fixed set $S_0\subseteq [n]$ and a fixed ordering of the vertices in $S_0$ and we wish to determine the likelihood that there is a Hamilton cycle that goes through $S_0$ in the given order. We do not require that the vertices of $S_0$ be visited consecutively. Without loss of generality we can assume that $S_0=[s_0]$ and that we wish to find $S_0$ in the natural order.
\begin{theorem}\label{th3}
Let $p=(\log n+\log\log n+\om)/n,\,\om=o(\log\log n)$ and $s_0=\om_1 n/\log n$ where $\om_1=o(\log\log\log n)$. Then w.h.p. $G_{n,p}$ contains a Hamilton cycle in which the vertices $S_0$ appear in natural order.
\end{theorem}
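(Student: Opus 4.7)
My plan is to prove Theorem~\ref{th3} by a two-stage strategy: first find any Hamilton cycle of $G=G_{n,p}$, then transform it via local cycle modifications to obtain one in which $S_0$ appears in natural cyclic order. I would use a two-round exposure $G_{n,p}=G_1\cup G_2$ with $G_1\sim G_{n,p_1}$ for $p_1=(\log n+\log\log n+\om/2)/n$ and an independent sprinkling $G_2$. By the classical theorem of Koml\'os--Szemer\'edi and Bollob\'as, $G_1$ is Hamiltonian w.h.p., giving a Hamilton cycle $H_0$; the edges of $G_2$ are then used for adjustments and are independent of the process producing $H_0$.

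\textbf{Rearrangement via 2-switches.} On $H_0$ the vertices of $S_0$ appear in some cyclic order $\s$. A \emph{2-switch} replaces two cycle edges $\set{u_1,u_2}$ and $\set{u_3,u_4}$ (in cyclic order on $H$) by $\set{u_1,u_3}$ and $\set{u_2,u_4}$, producing a new Hamilton cycle in which the arc from $u_2$ to $u_3$ is reversed. A 2-switch whose reversed arc contains exactly two $S_0$-vertices transposes them in cyclic order. I plan to sort $\s$ to the identity by a bubble-sort-style sequence of at most $\binom{s_0}{2}$ such transpositions, drawing the required edges from $G_2$. To make enough candidate switches available, I would couple this with P\'osa rotation--extension inside each arc of $H_0$, exchanging the arc for an alternative spanning sub-path on the same vertex set and thereby enlarging the set of arc-boundary edges eligible to be switched.

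\textbf{Main obstacle.} The heart of the proof is a tight edge-accounting argument. Typical arcs between consecutive $S_0$-vertices have length $\ooi\log n/\om_1$, and a naive count gives $(\log n/\om_1)^2$ candidate switches per transposition with success probability $p_2^2$ each; the expected count is then small, so some amplification is required. I would address this by (a) organizing the switch schedule in advance based only on the cyclic-order data of $S_0$ on $H_0$, (b) reserving disjoint sub-budgets of $G_2$-edges for each scheduled switch so that success events are independent, and (c) combining the direct 2-switch budget with a refined combinatorial argument (such as reversing longer arcs or chaining multiple compatible switches simultaneously) to meet the global edge budget. The hypothesis $\om_1=o(\log\log\log n)$ should be precisely what is needed for the balance to close, and verifying this balance is where I expect the main technical work to lie.
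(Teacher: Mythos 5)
Your strategy is genuinely different from the paper's. The paper never starts from a generic Hamilton cycle: it constructs, from scratch, a single path $P^*$ of length $o(n)$ that passes through $S_0$ in the required order, by stringing together small detours $P_v$ around each $v\in S_0$ and short connecting paths $Q_v$ of length $O(\log n/\log\log\log n)$ in a residual graph whose diameter it controls; the hypothesis $\om_1=o(\log\log\log n)$ is exactly what makes $\sum_v(|P_v|+|Q_v|)=o(n)$. It then contracts $P^*$ to a single edge and completes to a Hamilton cycle by a \emph{restricted} P\'osa rotation--extension argument that never deletes the contracted edge. Your plan---take an unordered Hamilton cycle $H_0$ of $G_1$ and sort the cyclic order of $S_0$ by a sequence of $2$-switches funded by a sprinkle $G_2$---is a legitimately different route, so the comparison is not the problem.

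The gap is in the amplification step, and as written it does not close. A typical arc of $H_0$ between consecutive $S_0$-vertices has length about $n/s_0=\log n/\om_1$, and even the longest arc has length $O(\log^2 n/\om_1)$. A vertex set of size $O(\log^2 n)$ in $G_{n,p}$ with $p=O(\log n/n)$ spans $O(\log^5 n/n)=o(1)$ edges in expectation, and summing over all $s_0$ arcs the total expected number of chords is only $O(\log^2 n/\om_1)\ll s_0$; so all but a vanishing fraction of arcs are w.h.p.\ chordless induced paths. P\'osa rotations \emph{inside} an arc therefore do not fire, the ``alternative spanning sub-path on the same vertex set'' you want to switch to simply does not exist for almost every arc, and the per-transposition success probability stays at the $(\log n/\om_1)^2p_2^2\ll 1$ you already computed. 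The bubble-sort schedule is also ruled out on edge-budget grounds alone: it would need $\Theta(s_0^2)=\Theta(\om_1^2n^2/\log^2 n)$ fresh switch edges, which exceeds the $\Theta(n\log n)$ edges present in the entire graph. The one sub-idea that could plausibly save the plan---reversing long arcs via a pancake-sort-style schedule of $O(s_0)$ reversals, each drawing on boundary arcs of length $\Omega(n/\mathrm{polylog}\,n)$ so that the expected number of available switch pairs is $\Omega(1)$---is the only lever with the right scaling, but you would then need to control how arc lengths evolve under the reversal schedule and show the candidate pools remain large throughout; as stated this is a direction, not an argument.
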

The bound $\om_1=o(\log\log\log n)$ is an artifact of our proof.\\
{\bf Conjecture:} we can replace this bound by $\om_1\leq c_1\log\log n$ for some constant $c_1>0$. 
\subsubsection{Prior work}
The closest result to this is the result of Robinson and Wormald \cite{RW}. They consider random regular graphs and ask for Hamilton cycles that contain a prescribed set of $o(n^{2/5})$ edges that must be contained in order in the cycle.
\subsection{Bounding the number of inversions}
Our final result concerns Hamilton cycles where we place a restiction on the number of invertions in the permutation of $[n]$ that it defines. So we treat a Hamilton cycle $H$ as a sequence $\bi=(i_1=1,i_2,\ldots,i_n)$ and we define $\i(H)=|\set{k<\ell:i_k>i_\ell}|$.
\begin{theorem}\label{th4}
Suppose that $M=\Omega(n\log n)$. There is a constant $K$ such that if $p\geq \frac{Kn\log n}{M}$ then w.h.p. $G_{n,p}$ contains a Hamilton cycle $H$ with $\i(H)\leq M$. Furthermore, if $p\leq p_\e=\frac{(1-\e)n}{eM}$ then w.h.p. $G_{n,p}$ contains no such Hamilton cycle. Here $\e$ is an arbitrary positive constant.
\end{theorem}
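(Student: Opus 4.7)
The two directions of Theorem~\ref{th4} call for different arguments.

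\textbf{Non-existence.} For $p\le(1-\e)n/(eM)$, I use a first-moment argument. Writing the cycle with $i_1=1$, a Hamilton cycle $H$ with $\i(H)\le M$ corresponds to a permutation of $\{2,\ldots,n\}$ with at most $M$ inversions; by the inversion-table bijection, the number of such permutations is at most the number of nonnegative integer solutions to $c_1+\cdots+c_{n-1}\le M$, namely $\binom{M+n-1}{n-1}$. Using Stirling and $M=\Omega(n\log n)$,
\[
\E\bigl[\#\{H:\i(H)\le M\}\bigr] \le \tfrac{1}{2}\binom{M+n-1}{n-1}p^n \le \left(\frac{eM}{n}\right)^{n-1} e^{O(n/\log n)} p^n,
\]
and substituting $p=(1-\e)n/(eM)$ produces $O(1/\log n)\cdot e^{-\e n(1-o(1))}=o(1)$. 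Markov's inequality then gives the conclusion.

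\textbf{Existence.} For $p\ge Kn\log n/M$, I build $H$ from a block partition. Set $b=\lfloor 2M/n\rfloor$ (so that $bp\ge 2K\log n$) and partition $[n]$ into $t=\lceil n/b\rceil$ consecutive intervals $B_j=\{(j-1)b+1,\ldots,jb\}$. The key observation is that any Hamilton cycle starting at $i_1=1$ that visits $B_1,B_2,\ldots,B_t$ in order, each block contiguously, has all its inversions confined to individual blocks, and hence $\i(H)\le t\binom{b}{2}\le nb/2\le M$. For $K$ sufficiently large, two facts hold w.h.p.: (i) each $G_{n,p}[B_j]$ is Hamilton-connected, since the sharp Hamilton-connectedness threshold gives per-block failure probability $n^{-\Omega(K)}$, which survives the union bound over $t\le n$ blocks; (ii) every vertex has $\Omega(bp)\gg 1$ neighbors in each adjacent block, by Chernoff plus union bound. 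Using (ii), I greedily select endpoints: put $u_1=1$, fix $v_t\in B_t$ adjacent to $u_1$, and for $j=1,2,\ldots,t-1$ in order pick $v_j\in B_j\setminus\{u_j\}$ with at least one neighbor in $B_{j+1}$, letting $u_{j+1}$ be such a neighbor---the abundance in (ii) makes each choice feasible and in particular forces $u_t\ne v_t$ at the last step. Applying (i) to find a $u_j$-to-$v_j$ Hamilton path inside each $B_j$ and concatenating with the inter-block edges $v_ju_{j+1}$ yields the required cycle.

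The chief technical point is quantitative: the per-block Hamilton-connectedness failure probability must be $o(1/n)$ for the union bound over blocks to survive, which requires a sharp form of the threshold theorem together with $K$ taken large relative to the implicit constant there. A small case analysis handles the regime $b$ comparable to $n$ (i.e.\ $M=\Theta(n^2)$), where there are only $O(1)$ blocks and the construction is in fact easier.
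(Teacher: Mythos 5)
Your non-existence argument is essentially identical to the paper's: both bound the number of low-inversion Hamilton cycles by the number of integer points in a simplex (via the inversion-table bijection) and apply first moment. The small differences (a spurious $1/2$, using $\binom{M+n-1}{n-1}$ instead of $\binom{M+n}{n}$) are immaterial.

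For existence, however, you take a genuinely different route. The paper applies the Frankston--Kahn--Narayanan--Park spread theorem (Theorem~\ref{th3a}): it restricts attention to the subfamily $\cH$ of Hamilton cycles whose inversion-table entries $\m_j$ are each capped at $M/n$ for $j>M/n$, computes $|\cH|$ and $|\cH\cap\upp{S}|$ via \eqref{sumL}--\eqref{sizeS}, and reads off $\k=M/(en)$. Your approach instead builds the cycle explicitly from a block decomposition: visit consecutive intervals $B_1,\ldots,B_t$ of size $b\approx 2M/n$ in order, stitch them together with inter-block edges, and observe that inversions only occur within blocks. That observation is correct; indeed if $|B_j|=b$ for $j<t$ and $|B_t|\le b$, then $\sum_j\binom{|B_j|}{2}=(t-1)\binom{b}{2}+\binom{|B_t|}{2}\le n(b-1)/2<nb/2\le M$, so your displayed chain of inequalities (which glosses over the last partial block by writing $t\binom{b}{2}$) lands in the right place after a one-line fix. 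Both routes give $p\ge Kn\log n/M$.

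The genuine technical debt in your version is exactly where you flag it: you need each $G_{n,p}[B_j]$ to be Hamilton-connected with per-block failure probability $o(1/t)$, and $t$ can be as large as $\Theta(n/\log n)$. This does work out, but only because $bp\geq 2K\log n$ forces both $b\geq 2K\log n$ (so the ``boosters'' error $e^{-\Omega(b)}$ is $n^{-\Omega(K)}$) and $\Pr(\delta(G_{b,p})<3)=O(b(bp)^2e^{-bp})=n^{-\Omega(K)}$; it is not automatic from a generic ``sharp threshold'' statement, which typically only gives per-block failure $o(1)$. You also need a guard ensuring $t\geq 2$ (e.g.\ $b\leftarrow\min\{\lfloor 2M/n\rfloor,\lfloor n/2\rfloor\}$) so the ``single giant block'' degenerate case, where an arbitrary Hamilton cycle can have $\i(H)$ near $\binom n 2>M$, does not slip through. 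Compared to the paper, your construction is more elementary and self-contained (no spread machinery), but the paper's approach gives the rainbow strengthening Theorem~\ref{th4a} essentially for free via Theorem~\ref{thrainbow}, which your block-by-block stitching would need additional work to replicate.
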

We get a restricted rainbow version almost for free:
\begin{theorem}\label{th4a}
Suppose that the edges of $G_{n,p}$ are randomly colored with one of $q\geq n$ colors. There is a constant $K=K(\e)$ such that if $p\geq \frac{K\log n}{n}$ then w.h.p. $G_{n,p}$ contains a rainbow Hamilton cycle $H$ with $\i(H)\leq \e n^2$.
\end{theorem}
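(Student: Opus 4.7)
The idea is to derive Theorem \ref{th4a} directly from Theorem \ref{th2a} by a suitable choice of vertex coloring and cycle pattern. Set $M=\lceil 3/(2\e)\rceil$, a constant depending on $\e$; for simplicity assume $M\mid n$ (otherwise take block sizes $\lfloor n/M\rfloor$ or $\lceil n/M\rceil$, which perturbs constants by $O(1/n)$). Color each vertex $v\in[n]$ by $c(v)=\lceil vM/n\rceil$, so that the color classes $V_j=\set{(j-1)n/M+1,\ldots,jn/M}$, $j\in[M]$, are consecutive blocks of size $n/M$ and $\vb=(1/M,\ldots,1/M)$. Let $\bc$ be the pattern in which color $j$ appears at positions $(j-1)n/M+1,\ldots,jn/M$. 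These choices satisfy the hypotheses of Theorem \ref{th2a} with $\va=\vb$, so for $K=K(\vb)=K(\e)$ sufficiently large we obtain w.h.p.\ a $\bc$-colored rainbow Hamilton cycle $H$ in $G_{n,p}^{\vb}$---equivalently, a rainbow Hamilton cycle whose vertex sequence visits the blocks $V_1,V_2,\ldots,V_M$ in cyclic order.

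It then remains to check $\i(H)\leq \e n^2$. Rotate $H$ cyclically and orient it in the direction that traverses the blocks as $V_1,V_2,\ldots,V_M$, writing $\bi=(i_1=1,i_2,\ldots,i_n)$. Since $1\in V_1$, vertex $1$ lies at some position $t\in\set{1,\ldots,n/M}$ of the $V_1$-segment of $H$; after the rotation $\bi$ reads: vertex $1$, then the $n/M-t$ vertices of $V_1$ that follow $1$ in $H$, then the blocks $V_2,\ldots,V_M$ in order (each appearing as a contiguous subsequence), then the $t-1$ vertices of $V_1$ that precede $1$ in $H$. Every inversion of $\bi$ is therefore (i) within a single block $V_i$, contributing at most $M\binom{n/M}{2}\leq n^2/(2M)$ in total, or (ii) between the wraparound $V_1$-suffix at the end of $\bi$ and the middle blocks $V_2,\ldots,V_M$ (each such pair is an inversion since $V_1$-values are the smallest but appear last), contributing at most $(t-1)(M-1)n/M\leq (M-1)n^2/M^2$. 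Summing, $\i(H)\leq \frac{n^2}{2M}+\frac{(M-1)n^2}{M^2}\leq \frac{3n^2}{2M}\leq \e n^2$ by the choice of $M$.

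The main (and only) task is the inversion bookkeeping above; the rainbow constraint is automatic once Theorem \ref{th2a} is invoked, so there is no substantive obstacle---this is the sense in which Theorem \ref{th4a} is obtained ``almost for free.''
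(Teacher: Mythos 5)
Your proof is correct, and it takes a genuinely different route from the paper's. The paper obtains Theorem~\ref{th4a} as a corollary of the spread computation carried out for Theorem~\ref{th4}: the hypergraph $\cH$ of Hamilton cycles with restricted $\m_j$-profiles is shown to be $\k$-spread with $\k=M/en$, and the rainbow conclusion follows by simply swapping Theorem~\ref{th3a} for its rainbow counterpart Theorem~\ref{thrainbow}; the $\k=\Omega(r)$ hypothesis of Theorem~\ref{thrainbow} forces $M=\Omega(n^2)$, which is why only the $\e n^2$ case is stated. Your approach instead uses Theorem~\ref{th2a} as a black box: you partition $[n]$ into $M=\lceil 3/(2\e)\rceil$ consecutive blocks, prescribe the cyclic block pattern, get a rainbow block-ordered Hamilton cycle from Theorem~\ref{th2a}, and finish with elementary inversion bookkeeping (at most $n^2/2M$ within-block inversions plus at most $(M-1)n^2/M^2$ from the wraparound of the starting block). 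The two approaches land in the same constant-$\e$ regime for structurally parallel reasons: you need $M=O(1)$ blocks so that $\vb$ is a fixed distribution to which Theorem~\ref{th2a} applies, while the paper needs $M=\Omega(n^2)$ to satisfy $\k=\Omega(r)$. Your reduction is arguably cleaner and more modular, since it avoids re-examining the inversion-spread computation; the paper's is more economical given that the spread estimate was already done. Two trivia: Theorem~\ref{th2a} does not involve $\va$, so "with $\va=\vb$" is a slip; and for $\e$ close to $1$ your $M$ could be $1$, in which case your block bound is vacuous, but there the claim is trivial since every Hamilton cycle already has $\i(H)\le\binom{n}{2}<\e n^2$. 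Neither affects correctness.
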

In general, except for the case $M=\Omega(n^2)$, there is a $\log n$ gap between the upper and lower bound in Theorem \ref{th4}. (The gap is smaller for $M=\Omega(n^2)/\om,\om=o(\log n)$.) We will be able to remove this gap by studying a greedy algorithm from Frieze and Pegden \cite{FP}.
\begin{theorem}\label{greedy}
If $M\leq Kn^2/\log^2n$ and $p\geq \frac{100\max\set{K,1}n}{M}$ then w.h.p. $G_{n,p}$ contains a Hamilton cycle $H$ with $\i(H)\leq M$. 
\end{theorem}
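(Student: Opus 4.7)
The plan is to invoke the greedy algorithm of Frieze and Pegden~\cite{FP} in its most natural form. Starting from $v_1=1$, at each step $t$ we let $v_{t+1}$ be the smallest unvisited neighbor of $v_t$ in $G_{n,p}$, producing a Hamilton path that we then close into a cycle via a short P\'osa-rotation phase run on the suffix. The hypotheses $M\le Kn^2/\log^2 n$ and $p\ge 100\max\{K,1\}n/M$ combine to give $np\ge 100\log^2 n$, comfortably above the Hamiltonicity threshold, so a standard union bound shows that w.h.p.\ the greedy never fails to extend, and the final rotations close the cycle while touching only a negligible suffix.

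The core of the proof is a uniform bound on the displacement $|v_t-t|$. Let $r_t$ denote the rank of $v_t$ in the increasing list of unvisited vertices at the start of step $t$. Using deferred decisions, each candidate in this list is independently a neighbor of $v_{t-1}$ with probability $p$, so $r_t$ is stochastically dominated by a $\mathrm{Geometric}(p)$ variable and the ranks are essentially independent across $t$. Introducing the backlog $Q_t=|\{j\le t\colon j\notin\{v_1,\ldots,v_t\}\}|$ of small-index unvisited vertices, this geometric rank structure turns $Q_t$ into a random walk with negative drift whenever $Q_t>0$ and stationary size $O(1/p)$. The concentration argument of~\cite{FP} then yields the uniform bound $|v_t-t|\le C/p$ for every $t$, w.h.p., for some absolute constant $C$.

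Given the displacement bound, the inversion count is immediate. If $k<\ell$ and $v_k>v_\ell$, then
\[
\tfrac{2C}{p}\ge (v_k-k)-(v_\ell-\ell)=(v_k-v_\ell)+(\ell-k)\ge 1+(\ell-k),
\]
so $\ell-k<2C/p$, meaning every position participates in fewer than $2C/p$ inversions. Hence $\i(H)<2Cn/p$, and plugging in $p\ge 100\max\{K,1\}n/M$ gives $\i(H)<CM/(50\max\{K,1\})\le M$ as soon as $C\le 50$.

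The main obstacle is establishing the \emph{uniform} $O(1/p)$ displacement bound. A naive union bound over the $n$ steps would only give $O(\log n/p)$, costing an extra $\log n$ factor and reproducing the very gap that Theorem~\ref{th4} leaves open. The refined argument of~\cite{FP} exploits the exponential tails of the Geometric$(p)$ ranks together with the negative drift of $Q_t$ to obtain sharper concentration, and the restriction $M\le Kn^2/\log^2 n$ is precisely the regime in which those exponential tails absorb the union bound.
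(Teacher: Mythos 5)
Your overall plan (run the greedy ``jump to smallest unvisited neighbor'' algorithm, bound its inversions, then complete to a cycle on a small leftover set) matches the paper's, but the central quantitative claim you rely on is too strong, and the argument you sketch for it does not deliver it.

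You assert a uniform displacement bound $|v_t-t|\le C/p$ for \emph{every} $t$, w.h.p. This is false. The rank $r_t$ of the chosen vertex in the sorted unvisited list is (dominated by) a Geometric$(p)$, and the maximum of $n$ (nearly independent) such geometrics is $\Theta(\log n/p)$ with high probability, not $O(1/p)$. Negative drift of the backlog $Q_t$ combined with exponential tails for the increments gives a \emph{stationary} distribution with exponential tails, whose median is $O(1/p)$ — but the maximum over $n$ steps still inflates to $\Theta(\log n/p)$; drift cannot suppress the single large rank excursions that already appear in a single step. If you replace $C/p$ by the honest $C\log n/p$ in your displacement argument, your inversion bound becomes $\i(H)=O(n\log n/p)$, and plugging in $p\ge 100\max\{K,1\}n/M$ yields $O(M\log n/K)$, which is \emph{not} $\le M$ unless $K\gtrsim\log n$ — you have reproduced the very $\log n$ gap that Theorem~\ref{th4} already has, rather than closing it.

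The fix is to abandon the pointwise bound and bound the \emph{sum} directly. Letting $a_j$ denote the number of unvisited vertices smaller than $v_j$ at step $j$, one has $\a_j\le a_j$, the $a_j$'s are i.i.d.\ $\mathrm{Geo}(p)-1$, and $\sum_j a_j$ concentrates around $n(1-p)/p=O(n/p)$ by standard concentration for sums of geometrics. This gives $\sum_j\a_j=O(n/p)$ w.h.p.\ without ever needing a uniform per-step bound; a few steps with $\a_j$ as large as $\log n/p$ do no harm because their total contribution is lower order. This is exactly how the paper's proof proceeds. Two further points your sketch glosses over: (i) the greedy is run with a split edge budget $G_1\cup G_2$, and is stopped with $\sim 2\log n/p_1$ vertices remaining so that $G_2$ restricted to the leftover set is still well above its Hamiltonicity threshold; (ii) the leftover tail contributes $|U|(n-j_1)=O(\log^2 n/p^2)$ extra inversions, which must be (and in the paper is) verified to be $\le M$ under the hypotheses — your ``touch only a negligible suffix'' remark does not establish that the suffix's contribution to $\i(H)$ is negligible relative to $M$.
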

\section{Proof of Theorem \ref{th1}}\label{sec1}
Let $N=\binom{n}{2}$ and consider the following sequence of (partially) edge colored graphs $\G_m,m=0,1,\ldots,N$. Let $e_1,e_2,\ldots,e_N$ be an enumeration of the edges of $K_n$. To construct $\G_t$ we include $e_1,e_2,\ldots,e_t$ independently with probability $kp$ and give each included edge a random color using distribution $\va$. Then for $i>t$ we include each edge independently with probability $p$. Thus $\G_0$ is a copy of $G_{n,p}$ and $\G_N$ is a copy of $G_{n,kp;\va}$. 

A Hamilton cycle $H=(e_{\p(i)},i=1,2,\ldots,n)$ (as a sequence of edges) of $\G_t$ is $(\bc,t)$-{\em proper} if $c(e_{\p(j)})=c_j$ for $\p(j)\leq t$. Let $\cG_t$ denote the set of graphs containing a $(\bc,t)$-proper Hamilton cycle. 
\begin{lemma}\label{comp}
\[
\Pr(\G_t\in\cG_t)\leq \Pr(\G_{t+1}\in\cG_{t+1})\text{ for }t\geq 0.
\]
\end{lemma}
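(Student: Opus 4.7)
The plan is to use a coupling argument. I would realize $\G_t$ and $\G_{t+1}$ on the same probability space so that they agree on the state (presence and, where applicable, color) of every edge other than $e_{t+1}$; only $e_{t+1}$ is resampled. Under this coupling, $e_{t+1}$ is present in $\G_t$ with probability $p$ and carries no color, while in $\G_{t+1}$ it is present with probability $\b p$ and, if present, carries an independent color drawn from $\va$. It will suffice to show, conditionally on the shared randomness $\cE$, that $\Pr(\G_{t+1}\in\cG_{t+1}\mid \cE)\geq \Pr(\G_t\in\cG_t\mid \cE)$, and then integrate.

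The key observation I would rely on is that for any Hamilton cycle $H$ that does not use $e_{t+1}$, the properties of being $(\bc,t)$-proper in $\G_t$ and $(\bc,t+1)$-proper in $\G_{t+1}$ coincide, since the two definitions differ only in the color constraint on $e_{t+1}$ itself. Consequently, if the shared randomness already supports a proper Hamilton cycle avoiding $e_{t+1}$, both conditional probabilities equal $1$ and the inequality is trivial. In the remaining case, I would let $J\subseteq[n]$ be the set of positions $j$ such that some Hamilton cycle on the edges determined by $\cE$ together with $e_{t+1}$ is proper and places $e_{t+1}$ at position $j$. If $J=\es$, both conditional probabilities equal $0$; if $J\neq\es$, then $\Pr(\G_t\in\cG_t\mid\cE)=p$ (because $(\bc,t)$-properness places no color requirement on $e_{t+1}$, whose index exceeds $t$), whereas $\Pr(\G_{t+1}\in\cG_{t+1}\mid\cE)=\b p\sum_{i\in\set{c_j:j\in J}}\a_i$. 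Fixing any $j^*\in J$, the latter is at least $\b p\,\a_{c_{j^*}}\geq \b p\cdot\min_i\a_i=p$, using $\b=(\min_i\a_i)^{-1}$.

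I expect the only mild subtlety to be checking that the two definitions of properness really do agree on Hamilton cycles avoiding $e_{t+1}$, so that the case split is clean and the probability in the ``no color requirement'' case is genuinely $p$ rather than some restriction of it. Once this is in place, the probability comparison in the remaining case is immediate from the definition of $\b$, and the lemma follows by integrating the conditional inequality over $\cE$.
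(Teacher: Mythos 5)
Your proposal is correct and follows essentially the same coupling argument as the paper (conditioning on all edges except $e_{t+1}$, splitting into the trivial cases and the case where $e_{t+1}$ matters, and then using $\b p\cdot\min_i\a_i=p$). Your version is slightly more explicit in tracking the set $J$ of admissible positions for $e_{t+1}$ and in verifying that $(\bc,t)$- and $(\bc,t+1)$-properness coincide for cycles avoiding $e_{t+1}$, but this is a cleanup of the same idea rather than a different route.
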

\begin{proof}
We use a modification of the coupling argument of McDiarmid \cite{McD}. The status of edge $e_i$ consists of (i) whether or not it is included and (ii) its color if $i\leq t$. We condition on the {\em identical} status of the edges $e_i,i\neq t+1$ in $\G_t,\G_{t+1}$ and argue about the conditional probability of both graphs having a (\bc,t)-proper Hamilton cycle. Denote these conditional probabilities by $p_t,p_{t+1}$ respectively. The conditional probability space is now just the status of $e_{t+1}$ in $\G_t,\G_{t+1}$. We argue that $p_t\leq p_{t+1}$. Let $\wh\G$ denote the subgraph induced by the edges $e_i,i\neq t+1$ whose status means they are included in $\G_t$ and $\G_{t+1}$. (Thus $\wh\G$ is only partially edge colored.) There are several cases: 
\begin{enumerate}
\item $\wh\G\in \cG_t\cap \cG_{t+1}$. In this case $p_t=p_{t+1}=1$.
\item $\wh\G+e_{t+1}\notin\cG_t\cup \cG_{t+1}$, regardless of the status of $e_{t+1}$. In this case $p_t=p_{t+1}=0$.
\item Failing 1. and 2. we consider the case where $\wh\G$ is such that the existence of the edge $e_{t+1}$ matters. We consider the event (i) that including $e_{t+1}$ creates a $(\bc,t)$-proper Hamilton cycle $P+e_{t+1}$ in $\G_t$ and the event (ii) that including $e_{t+1}$ with an appropriate color creates a $(\bc,t)$-proper Hamilton cycle $P+e_{t+1}$ in $\G_{t+1}$. In this case we see that 
\[
p_{t+1}=\Pr((ii))\geq \min\set{\b p\a_i:i\in[k]}\geq p=\Pr((i))=p_{t}.
\]
\end{enumerate}
\end{proof}
This proves Theorem \ref{th1}.
\section{Proof of Theorems \ref{th2} and  \ref{th2a}}\label{th2th2a}
For this theorem we will use the breakthrough result of Frankston, Kahn, Narayanan and Park \cite{FKNP}. Recall the setup in \cite{FKNP}: A hypergraph $\cH$ (thought of as a set of edges) is $r$-bounded if $e\in \cH$ implies that $|e|\leq r$.  For a set $S\subseteq X=V(\cH)$ we let $\upp{S}=\set{T:\;S\subseteq T\subseteq X}$ denote the subsets of $X$ that contain $S$. Let $\upp{\cH}=\bigcup_{H\in \cH}\upp{H}$ be the collection of subsets of $X$ that contain an edge of $\cH$. We say that $\cH$ is $\k$-spread if we have the following bound on the number of edges of $\cH$ that contain a particular set $S$: 
\beq{spread}{
|\cH\cap \upp{S}|\leq \frac{|\cH|}{\k^{|S|}},\quad\forall S\subseteq X.
}
Let $X_p$ denote a subset of $X$ where each $x\in X$ is included independently in $X_p$ with probability $p$. The following theorem is from \cite{FKNP}:
\begin{theorem}\label{th3a}
Let $\cH$ be an $r$-bounded, $\k$-spread hypergraph and let $X=V(\cH)$. There is an absolute constant $C>0$ such that if
\beq{mbound}{
p\geq\frac{C\log r}{\k}
}
\end{theorem}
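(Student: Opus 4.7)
The theorem statement above is truncated in the excerpt; the intended conclusion of Frankston, Kahn, Narayanan and Park \cite{FKNP} is that $\Pr(X_p \in \upp{\cH}) \to 1$, i.e., $X_p$ contains some edge of $\cH$ w.h.p. My plan is to outline the FKNP strategy; in this paper one would simply invoke the theorem as a black box, but to sketch a proof one can proceed by multi-round exposure.

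First I would replace the single exposure at rate $p$ by $T = \lceil \log_2 r \rceil + O(1)$ independent rounds $U_1,\ldots,U_T$ in which every $x \in X$ is included independently with probability $q = C_0/\k$ for a suitable constant $C_0$. A union bound $\Pr(x \in U_1 \cup \cdots \cup U_T) \leq Tq$ together with \eqref{mbound} lets me arrange that $X_p$ stochastically dominates $W_T := U_1 \cup \cdots \cup U_T$, so it is enough to prove $W_T \in \upp{\cH}$ w.h.p.

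The heart of the argument is an iterative ``deficit-halving'' lemma. Fix $W \subseteq X$, let $H \in \cH$ minimize the deficit $|H \setminus W|$ and write $D = H \setminus W$. A fresh round $U$ of density $q$ satisfies, with probability $1 - e^{-\Omega(|D|)}$, that some $H' \in \cH$ has $|H' \setminus (W \cup U)| \leq |D|/2$. This is the only place where $\k$-spread enters: one bounds, via \eqref{spread}, the number of candidate edges $H'$ that agree with $H$ on a prescribed subset, and couples this with a counting/entropy argument (in the spirit of the sunflower bound of Alweiss--Lovett--Wu--Zhang) to show that some candidate's remaining deficit is covered by $U$ with the stated probability.

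Iterating the halving lemma $T = O(\log r)$ times, starting from deficit at most $r$, drives the deficit below $1$, so $H_T \subseteq W_T \subseteq X_p$. The main obstacle is the halving lemma itself: the challenge is to balance the (spread-controlled) count of surviving candidate edges against the (density-$1/\k$-controlled) probability that $U$ covers a specific complement, with the constant $C$ in \eqref{mbound} tuned so that the cumulative failure probability over all $T$ rounds is $o(1)$. The remaining ingredients---the stochastic domination of $X_p$ by $W_T$ and the union bound over rounds---are routine.
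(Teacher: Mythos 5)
You have correctly recognized that the paper does not prove Theorem \ref{th3a} at all: it is quoted verbatim as the main result of Frankston, Kahn, Narayanan and Park \cite{FKNP} and then applied as a black box in the proofs of Theorems \ref{th2}, \ref{th2a}, and \ref{th4}. So there is no paper proof to compare your sketch against; the ``proof'' in the paper is simply the citation.

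As a summary of the FKNP argument your outline has the right shape — replace one exposure at density $p$ by $O(\log r)$ independent rounds at density $\Theta(1/\k)$, couple so that the union of rounds is contained in $X_p$, and show that each round shrinks the uncovered ``fragment'' of some edge of $\cH$ using the $\k$-spread condition — but the specific key lemma you state is not quite the one FKNP prove. You claim a per-round halving of the deficit $|H\setminus W|$ with failure probability $e^{-\Omega(|D|)}$; the actual FKNP mechanism bounds the \emph{expected} size of the minimum fragment after one fresh round (over a random $W$, a random $H\in\cH$, and the fresh exposure) and shows this expectation drops by a constant factor, then iterates in expectation and finishes with a single application of Markov's inequality. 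No per-round concentration of the type you describe is established, and one would have to work to justify the $e^{-\Omega(|D|)}$ claim — indeed, small fragments with $|D|=O(1)$ give no useful tail bound there. Since the paper only invokes the theorem, this distinction has no bearing on the correctness of the paper itself, but if your goal were to reconstruct the FKNP proof you would need to replace your high-probability halving lemma with the expectation-based fragment bound (their Lemma 2.1 and the surrounding iteration).
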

then w.h.p.~$X_p$ contains an edge of $\cH$. Here w.h.p.~assumes that $r\to\infty$.

Bell, Frieze and Marbach \cite{BFM} and He and Yuan \cite{HY} proved a rainbow version of Theorem \ref{th3a}. 
\begin{theorem}\label{thrainbow}
Let $\cH$ be an $r$-bounded, $\k$-spread hypergraph and let $X=V(\cH)$ be randomly colored from $Q=[q]$ where $q\ge r$. Suppose also that $\k=\Omega(r)$. There is an absolute constant $C>0$ such that if 
\beq{mbounds}{
p\geq\frac{C\log r}{\k}
}
then w.h.p.~$X_p$ contains a rainbow colored edge of $\cH$. Here w.h.p.~assumes that $r\to\infty$ (and thus $\k\to\infty$).
\end{theorem}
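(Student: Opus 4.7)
The plan is to reduce Theorem \ref{thrainbow} to Theorem \ref{th3a} by lifting $\cH$ to a product hypergraph that tracks colorings. Set $X' = X \times Q$ and define
\[
\cH' = \set{\set{(x, \sigma(x)) : x \in e} : e \in \cH,\ \sigma : e \to Q \text{ injective}},
\]
so that each edge of $\cH'$ corresponds to a rainbow-colored realisation of an edge of $\cH$, and $\cH'$ is $r$-bounded. A direct count shows that for $S \subseteq X'$ that is injective on both projections (the only case in which $\upp{S} \cap \cH' \neq \es$), and assuming for simplicity that $\cH$ is $r$-uniform,
\[
\frac{|\cH' \cap \upp{S}|}{|\cH'|} \leq \k^{-|S|}\prod_{i=0}^{|S|-1}\frac{1}{q-i} \leq (\k(q-r+1))^{-|S|},
\]
so $\cH'$ is $\k(q-r+1)$-spread.

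In the regime $q$ substantially larger than $r$, this spread is $\Omega(\k q)$, and Theorem \ref{th3a} applied to $\cH'$ at sampling probability $p^* = p/q$ delivers an edge of $\cH'$ in $(X')_{p^*}$ w.h.p.~for $p \geq C\log r/\k$. I would then transfer this conclusion to the constrained random set $Y = \set{(x, \chi(x)) : x \in X_p} \subseteq X'$ coming from our model: each pair $(x, c)$ lies in $Y$ with the same marginal probability $p/q$ as in $(X')_{p^*}$, and an edge of $\cH'$ uses each element of $X$ at most once, so its existence depends on only one color per element. The transfer can be implemented via a second-exposure coupling: first sample $Y$, then sprinkle independent pairs to build $(X')_{p^*}$, and argue that the FKNP-guaranteed edge is already captured by $Y$ because $Y$'s per-pair marginals suffice to witness any fixed edge of $\cH'$.

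The main obstacle is the boundary regime $q$ close to $r$: the naive lift yields only $\k' = \k$ there, and the resulting threshold for $p$ is worse than the statement by a factor of $r$. To recover the sharper bound I would follow \cite{BFM,HY} and adapt the inductive spread argument of \cite{FKNP} directly, carrying along a palette of colors still available at each element through the iteration. The hypothesis $\k = \Omega(r)$ ensures that at every stage at most $r-1$ colors have been forbidden at any single element, so a positive fraction of $Q$ remains eligible and the spread bound can be propagated through the FKNP iteration. Implementing this color-bookkeeping inside the FKNP machinery, rather than via a black-box reduction, is the technical heart of the proof.
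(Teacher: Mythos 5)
Theorem \ref{thrainbow} is not proved in this paper: it is quoted from \cite{BFM} and \cite{HY}, with the parenthetical remark that \cite{BFM} only delivers a constant-probability version and that a theorem of Friedgut \cite{F05} is then invoked to upgrade to w.h.p.\ (a step your proposal does not address), while \cite{HY} later removed the restriction. So there is nothing in this paper to compare your proof against; I can only assess the proposal on its own merits.

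Your first route, the product lift $X' = X \times Q$ with $\cH'$ the set of rainbow realisations, has a transfer problem more serious than you indicate, and it is present even when $q$ is much larger than $r$. The coupling you propose --- sample $Y$, then sprinkle up to $(X')_{p^*}$ --- goes in the wrong direction: an edge of $\cH'$ found in $(X')_{p^*}$ may consist precisely of the sprinkled pairs you added, so its existence says nothing about $Y$. Agreement of single-edge inclusion probabilities does not propagate to the event ``some edge of $\cH'$ is present,'' because spread-based thresholds are statements about the joint distribution, and $Y$ (at most one color per ground element) is neither a product measure on $X'$ nor stochastically above one; you would need a monotone coupling or a correlation inequality tailored to this constrained model, and none is offered. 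Your second difficulty --- that the lift only yields spread $\k(q-r+1)$, which collapses to $\k$ when $q$ is close to $r$ and costs a factor of roughly $r$ in the resulting threshold --- is correctly identified. The fallback you describe, carrying a shrinking palette through the \cite{FKNP} iteration and using $\k=\Omega(r)$ to keep a constant fraction of colors eligible at every element, is indeed the route taken in \cite{BFM} and \cite{HY}; but as written it is a statement of intent rather than an argument, and it is precisely where all of the work lies.
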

(In truth the general theorem in \cite{BFM} only proves that for a given $\e>0$ there is a constant $C_\e>0$ such if $C\geq C_\e$ then $X_p$ contains a rainbow colored edge of $\cH$ with probability $1-\e$. We have to apply a Theorem of Friedgut \cite{F05} (second remark following Theorem 2.1 of that paper) to obtain w.h.p. The paper \cite{HY} has subsequently removed these restrictions.)

In our use of Theorem \ref{thrainbow} we let $X=\binom{[n]}{2}$. Each $x=\set{u,v}\in X$ will have colored endpoints $\set{c(u),c(v)}$. Our hypergraph $\cH$ consists of sets of $n$ edges with colored endpoints that together make up a \bc-colored Hamilton cycle. We now check that \eqref{mbound} holds with $\k=\Omega(n)$.

We first observe that $|\cH|=\frac{1}{h}\prod_{i=1}^kn_i!$ where $n_i=|V_i|$ and $h$ is the number of automorphisms of a \bc-colored Hamilton cycle. Fix a set $S$ for which $\f(S):=|\cH\cap \upp{S}|>0$. In particular, $S$ is the edge-set of a collection of paths.  We bound $\phi(S)$ as follows.  If $S$ consists of the edges of $t$ paths, then to choose a Hamilton cycle consistent with $S$, we must
\begin{itemize}
\item Choose an orientation of each of the $t$ paths;
\item Choose, for the starting vertex of each of the $t$ paths, where its index is in $1,\dots,n$ in the Hamilton cycle, whose color must match the color of the starting vertex;
\item Choose, for each vertex not incident with any edge in $S$, its index in $1,\dots,n$ in the Hamilton cycle, whose color must match the color of the starting vertex.
\end{itemize}
Of course many such choices will not give rise to valid \bc-colored Hamilton cycles, but any valid \bc-colored Hamilton cycles consistent with $S$ can be specified by such choices.

Note that, as above, after choosing the orientation of the paths, we have to choose the index of at most $n-s$ vertices where $s=|S|$, since the number of components (paths or isolated vertices) of the graph induced by the set $S$ is $n-s$, and we only choose the index of the first vertex in each path.  In particular, after choosing the orientation of the path, suppose that we need to choose the index of $n_j-s_j$ vertices of color $j$ for each $j$, where $\sum s_j=s$.  Now, as $n_i=\a_in,j=1,\ldots,k$, we have the following bound on the number of choices:
\[
\frac{1}{h}\prod_{j=1}^k(n_j-s_j)!=|\cH|\prod_{j=1}^k\frac{(n_j-s_j)!}{n_j!}\leq |\cH|\prod_{j=1}^k\frac{e^{s_j^2/n_j}}{n_j^{s_j}}\leq e^s|\cH|\prod_{j=1}^k\frac{1}{n_j^{s_j}}\leq e^s|\cH|\frac{1}{n^s\a_{\min}^s}.
\]

This gives that 
\[
\phi(S)\leq 2^s|\cH|\frac{e^s}{n^s\a_{\min}^s}
\]
So, \eqref{mbound} holds with $r=n$ and $\k=\a_{\min}n/2e$. This proves Theorem \ref{th2a} (which implies Theorem \ref{th2}, perhaps with a smaller hidden constant $C$).
\section{Proof of Theorem \ref{1+2}}
For the proof of this theorem we combine McDiarmid's coupling with Theorem \ref{th2}. We use the notation of Section \ref{sec1}. We define the sequence $\G_1,\G_2,\ldots,\G_N$ similarly to how we did in that section but with the difference that we have colored the vertices as claimed.  

A Hamilton cycle $H=(e_{\p(i)},i=1,2,\ldots,n)$ (as a sequence of edges) of $\G_t$ and equal to $(v_{1},i=1,2,\ldots,n)$ (as a sequence of vertices) is $(\bc_1,\bc_2,t)$-{\em proper} if $c(e_{\p(j)})=c_{1,j}$ for $\p(j)\leq t$ and $c(v_i)=c_{2,i}$ for $i=1,2,\ldots,n$. Let $\cG_t$ denote the set of graphs containing a $(\bc_1,\bc_2,t)$-proper Hamilton cycle. 
\begin{lemma}
\[
p_t=\Pr(\G_t\in\cG_t)\leq p_{t+1}=\Pr(\G_{t+1}\in\cG_{t+1})\text{ for }t\geq 0.
\]
\end{lemma}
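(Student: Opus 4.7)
The plan is to repeat the McDiarmid-style coupling argument of Lemma \ref{comp} almost verbatim, taking advantage of the fact that the vertex coloring is entirely deterministic (fixed in advance from $\vb$) and so does not interact with the coupling at all. The vertex-color constraint $c(v_i)=c_{2,i}$ for all $i$ is a property of the Hamilton cycle as a sequence of vertices, and this constraint is imposed in exactly the same way on $\G_t$ and $\G_{t+1}$. Thus the only randomness that we have to couple is the inclusion (and, when $i\le t$ or $\le t+1$, the color) of each edge $e_i$.

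I would condition on the identical status of the edges $e_i,\,i\neq t+1$ in $\G_t$ and $\G_{t+1}$ and let $\wh\G$ be the common partially edge-colored subgraph they induce. Let $p_t,p_{t+1}$ denote the corresponding conditional probabilities of lying in $\cG_t$ and $\cG_{t+1}$ respectively, so that the lemma reduces to showing $p_t\le p_{t+1}$ after conditioning. As before, split into three cases:
\begin{enumerate}
\item $\wh\G\in \cG_t\cap \cG_{t+1}$: any $(\bc_1,\bc_2,t)$-proper Hamilton cycle in $\wh\G$ avoids $e_{t+1}$, so it is automatically $(\bc_1,\bc_2,t+1)$-proper in $\G_{t+1}$ (there is no new edge-color constraint to check). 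Hence $p_t=p_{t+1}=1$.
\item $\wh\G+e_{t+1}\notin\cG_t\cup\cG_{t+1}$: then $p_t=p_{t+1}=0$.
\item Otherwise the status of $e_{t+1}$ is decisive. In $\G_t$, $e_{t+1}$ is unconstrained in color and appears with probability $p$, so $p_t=p$. In $\G_{t+1}$, we need $e_{t+1}$ both present and colored to match the appropriate entry of $\bc_1$; this has probability at least $\min_i\{\b p\a_i\}=p$, where $\b=\min_i\a_i^{-1}$, because the vertex-color constraint is already satisfied by $P+e_{t+1}$ (which by hypothesis produces a $(\bc_1,\bc_2,t)$-proper cycle for some coloring choice of $e_{t+1}$, and altering only the color of $e_{t+1}$ does not affect vertex colors). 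Hence $p_{t+1}\ge p=p_t$.
\end{enumerate}

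The main check---and it really is just a check, not a genuine obstacle---is the subtle point in case 3 that the \emph{vertex}-pattern requirement is the same for $P+e_{t+1}$ as a cycle in $\G_t$ and as a cycle in $\G_{t+1}$, so only the edge-color factor $\min_i\{\b p\a_i\}$ changes, and that factor is exactly chosen to dominate $p$. After the lemma is established, Theorem \ref{1+2} follows by iterating from $t=0$ (where $\G_0$ is essentially $G_{n,p}^{\vb}$, which contains a $\bc_2$-vertex-patterned Hamilton cycle w.h.p.\ by Theorem \ref{th2} with $K$ chosen sufficiently large) up to $t=N$ (where $\G_N$ is $G_{n,\b p;\va}^{\vb}$), concluding that w.h.p.\ this final graph contains a Hamilton cycle which is simultaneously $\bc_1$-edge-patterned and $\bc_2$-vertex-patterned.
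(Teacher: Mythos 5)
Your proposal is correct and follows exactly the route the paper takes: the paper's own proof is the one-line remark that the argument is identical to Lemma \ref{comp} with $(\bc,t)$-proper replaced by $(\bc_1,\bc_2,t)$-proper, and you have simply unpacked that McDiarmid-style coupling explicitly, correctly identifying that the deterministic vertex coloring is invariant under the coupling and only the edge-color factor $\min_i\{\b p\a_i\}=p$ needs to be compared with $p$ in the decisive case.
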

\begin{proof}
The proof of this is identical to that of the proof of Lemma \ref{comp} except that we replace $(\bc,t)$-proper by $(\bc_1,\bc_2,t)$-proper.
\end{proof}
Now $\G_0$ is distributed as $G_{n,p}^{\vb}$ and Theorem \ref{th2} states that a $(\bc_1,\bc_2,0)$-proper Hamilton cycle exists w.h.p. On the other hand, $\G_N$ is distributed as $G_{n,p;\va}^{\vb}$ and the lemma implies that w.h.p. it contains a $(\bc_1,\bc_2,N)$-proper Hamilton cycle which is what we need to prove. This proves Theorem \ref{1+2}.

\section{Proof of Theorem \ref{th3}}
We begin by generating $G=\bigcup_{i=1}^4\G_i$ where each $\G_i,i=1,2,3,4$ is an independent copy of $G_{n,p_i}$. Here $p_3=p_4=\om/4n$ and $p_1=p_2$ and  $1-p=\prod_{i=1}^4(1-p_i)$. It follows that $p_1=p_2\sim p/2$. For $v\in[n]$, we let $d(v)$ denote the degree of vertex $v$ in $G_{n,p}$ and $d_i(v),i=1,2$ denote the degree of $v$ in $\G_i$.

\newcommand{\SMALL}{\mathrm{SMALL}}
\newcommand{\TINY}{\mathrm{TINY}}
\newcommand{\AVOID}{\mathrm{AVOID}}
\newcommand{\END}{\mathrm{END}}

Let $G_1=\G_1\cup\G_2$. $G_1$ has minimum degree at least 2, w.h.p.   We let $\SMALL=\set{v:d_{G_1}(v)\leq \frac{1}{40}\log n}$. An easy first moment calculation implies that w.h.p. 
\begin{enumerate}[S(i)]
\item $|\SMALL|\leq n^{1/3}$.
\item $v,w\in \SMALL$ implies that $dist(v,w)\geq 5$.\label{vwseparate}
\item No cycle of length less than 5 contains a vertex of $\SMALL$.
\end{enumerate}
The calculations supporting this claim can be found in Lemma 3.1 of \cite{BFF}. We note next that w.h.p. $G_{n,p}$ has maximum degree at most $5\log n$. This follows from a simple first moment calculation, again given in Lemma 3.1 of \cite{BFF}. 

Similarly, we let $\TINY_0=\set{v:d_1(v)\leq \frac{1}{40}\log n}$ and initialise $\TINY=\TINY_0$.   The graph $\G_1$ shrinks as our construction progresses in that we remove vertices from $\G_1$ and place them in $\TINY$ when their $\Gamma_1$-degrees become smaller than $\tfrac 1 {80}\log n$. We will show that $\TINY_0$ is small in Lemma \ref{tiny} below. Initialise the set $\AVOID=\SMALL\cup N(\SMALL)\cup \TINY$. (Here $N(S)=\set{w\notin S:\exists v\in S\text{ such that \{v,w\} is an edge of $G$}}$.)

The construction of our Hamilton cycle goes as follows. 
\begin{enumerate}[Step 1]
\item For $v=1,2,\ldots,s_0$ we construct a set of vertex disjoint paths $P_v=(x_v,\ldots,v,\ldots,y_v)$ where $x_v,y_v\notin \TINY$.  (If $v\notin \TINY$ then we can simply let $x_v=y_v=v$ and $P_v=v$).  These paths are of length at most 6. These paths will avoid using vertices in $\AVOID\cup (S_0\setminus\set{v})\cup\bigcup_{w<v}V(P_w)$. Also, for $v\geq 2$, we avoid using vertices in $N(x_1)$. All edges except perhaps those incident with $x_v,y_v$ will be from $\G_2$. After we create a path, we delete the vertices in the interior of the path and their incident edges.
\item We then use the edges of $\G_1$ to construct vertex disjoint paths $Q_v$ from $y_v$ to $x_{v+1}$ for $v=1,2,\ldots,s_0-1$.  They will be of length at most $4\log n/\log\log\log n$. These paths will avoid using vertices in $\AVOID\cup S_0$. After we create a path, we delete the vertices in the interior of the path and their incident edges. If after this deletion the $\G_1$-degree of a vertex becomes at most $\log n/80$ then we add it to $\TINY$ and update $\AVOID$.
\item We let $P^*=(P_1,Q_1,P_2,Q_2,\ldots,Q_{s_0-1},P_{s_0})$ and let  $x^*,y^*\notin\TINY$ be its endpoints. Here $x^*=x_1$ and $y^*=y_{s_0}$. 
\item We then use the extension-rotation algorithm to find a Hamilton cycle  that contains $P^*$ as a subpath.
\end{enumerate}
We note that because of our bound on $s_0$,
\beq{pathsize}{
\sum_{v=1}^{s_0}(|V(P_v)|+|V(Q_v)|)\leq s_0\brac{7+\frac{4\log n}{\log\log\log n}}=o(n).
}
\subsection{Analysis of Step 1}
We first show that $|\TINY|_0$ is small.
\begin{lemma}\label{tiny}
$|\TINY_0|\leq n^{2/3}$ w.h.p.
\end{lemma}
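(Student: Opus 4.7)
The plan is a straightforward first-moment / Markov argument, leveraging that $(n-1)p_1 = (1+o(1))(\log n)/2$ while the threshold $(\log n)/40$ is a fixed small fraction of this mean.

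First I would observe that for each fixed vertex $v$, the degree $d_1(v)$ is distributed as $\mathrm{Bin}(n-1,p_1)$ with mean $\lambda := (n-1)p_1 = (1+o(1))\tfrac{\log n + \log\log n + \om}{2}$, since $p_3=p_4=\om/(4n)=o(p)$ forces $p_1=p_2=(1+o(1))p/2$. The event $v\in\TINY_0$ is the lower-tail event $\{d_1(v)\le \lambda/20\cdot(1+o(1))\}$, so the standard Chernoff bound for a binomial lower tail gives
\[
\Pr[v\in\TINY_0]\;\le\; \binom{n-1}{k}p_1^{k}(1-p_1)^{n-1-k}\cdot O(1) \;\le\; \frac{(e\lambda/k)^k}{\sqrt{k}}\cdot e^{-\lambda}\cdot O(1),
\]
where $k=\lfloor (\log n)/40\rfloor$, the $O(1)$ factor coming from bounding the geometric-like sum over $j\le k$ by a constant multiple of its largest term at $j=k$.

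Next I would plug in $\lambda/k=20+o(1)$ and $e^{-\lambda}=n^{-1/2}(\log n)^{-1/2}e^{-\om/2}$: the binomial factor is at most $(20e)^{k}\le n^{(1+\log 20)/40}\le n^{0.1}$, while $e^{-\lambda}\le n^{-1/2+o(1)}$. Multiplying, $\Pr[v\in\TINY_0]\le n^{-0.4+o(1)}$. Linearity of expectation then gives $\mathbb{E}|\TINY_0|\le n^{0.6+o(1)}$, and Markov's inequality yields
\[
\Pr\bigl[|\TINY_0|\ge n^{2/3}\bigr]\;\le\; n^{-1/15+o(1)} \;=\; o(1),
\]
which is precisely the claim.

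The only mildly delicate point is making sure the Chernoff constant comes out with enough slack: we need the exponent on the bound for $\Pr[v\in\TINY_0]$ to be strictly larger than $1/3$, and indeed $0.4>1/3$ by a comfortable margin, so there is no obstacle. (This is likely the reason $\tfrac{1}{40}\log n$ was chosen as the $\TINY$ threshold: it gives room for a Markov bound of order $n^{2/3}$ while keeping the threshold well below the mean degree $(\log n)/2$ of $\Gamma_1$.) Essentially the same calculation, as the authors note, appears as Lemma~3.1 of \cite{BFF}, which could simply be cited once the parameters are matched.
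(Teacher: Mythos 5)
Your argument is correct and is essentially the same as the paper's: a first-moment bound on $\E|\TINY_0|$ via the binomial lower-tail estimate $\sum_{j\le k}\binom{n}{j}p_1^j(1-p_1)^{n-1-j}=O\bigl((e\lambda/k)^k e^{-\lambda}\bigr)$ (geometric domination by the largest term), followed by Markov. The paper's exponent bookkeeping yields $\E|\TINY_0|\le n^{3/5}$, matching your $n^{0.6+o(1)}$, and the application of Markov is identical.
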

\begin{proof}
We have
\mults{
\E(|\TINY_0|)\leq n\sum_{k=0}^{\log n/40}\binom{n}{k}p_1^k(1- p_1)^{n-1-k}\leq n\sum_{k=0}^{\log n/40}\bfrac{e^{1+o(1)}\log n}{2k}^kn^{-1/2}\\
\leq 2n(20e^{1+o(1)})^{\log n/40}n^{-1/2}\leq n^{3/5}.
}
The lemma now follows from the Markov inequality.
\end{proof}
Fix $v\in S_0$. Expose the edges of $G_1$ incident with $v$ and let $A_1$ denote the other endpoints of these edges. Assume first that $v\in \TINY\setminus\SMALL$, so that $|A_1|\geq \tfrac 1 {40}\log n$. We go through the vertices of $A_1$ in order until we find a vertex $a_v$ with a $\G_2$-neighbor $x_v\notin \AVOID\cup N(x_1) \cup\bigcup_{j=1}^{v-1}V(P_j)$. Lemma  \ref{tiny} and Lemma \ref{V1G} below ensures that $|\TINY|\leq 3n/5$, which implies that
\[| \AVOID\cup N(x_1) \cup\bigcup_{j=1}^{v-1}V(P_j)|\leq 3n/5+o(n).\]
If $\eta_v$ denotes the number of trials to find $a_v$ then $\eta_v$ is dominated by a geometric random variable with success probabilty at least $\brac{1-(1-p_2)^{2n/5+o(n)}}$ and so $\Pr(\eta_v\geq 10)=o(n^{-2})$. This verifies the existence of $a_v,x_v$. The rest of $P_v$ is justified similarly, we just find another path, avoiding $a_v,b_v,x_v$ as well. If $v\in \TINY\cap\SMALL$ then we choose two arbitrary neighbors of $v$, which will not be in $\SMALL$ by S(\ref{vwseparate}) and grow paths to $[n]\setminus \TINY$, avoiding $\AVOID\cup N(x_1)\cup \bigcup_{j=1}^{v-1}V(P_j)$.
\subsection{Analysis of Step 2}
We first remove vertices in $\AVOID$ from $\G_1$. We constuct $Q_1,Q_2,\ldots,Q_{s_0}$ in this order and at each step $v\geq 1$, we do the following in the graph $\G_1$: we remove the vertices of $Q_{v-1}$ (if $v\geq 2$) and then repeatedly remove vertices of degree (in $\G_1$) at most $\frac{1}{80}\log n$ until what remains has minimum degree at least $\frac{1}{80}\log n$. (Removed vertices are placed into $\TINY$.) We show that w.h.p. 
\begin{enumerate}[Property 1]
\item $|V(\G_1)|\geq 2n/5$ throughout.
\item The diameter of $\G_1$ is at most $4\log n/\log\log\log n$ throughout.
\end{enumerate}
We will in fact halt the construction and declare failure if either property fails to hold. Suppose we are constructing a path $Q_v$. What we argue is that as long as we have Property 1, we will w.h.p. have good expansion in $\G_1$. This will ensure that $Q_v$ is short, which will then be used to show that Property 1 continues to hold after we delete $Q_v$. So, there is no circular argument.

We begin with the following lemma:
\begin{lemma}
Suppose that $S\subseteq[n]$. In the graph $G$,
\begin{align}
&|S|\geq \s_0=\frac{5n}{\log n\log\log n}\text{ implies that }e(S)\leq \frac{3|S|^2\log n(\log\log n)^2}{2n}.\label{eq1}\\
&S|\geq \s_1=\frac{n}{4000(\log\log n)^2}\text{ implies that }e(S)\leq \frac{2|S|^2\log n}{n}.\label{eq1a}\\
&|S|\leq \s_0\text{ implies that }e(S)\leq \frac{7|S|\log n}{\log\log n}.\label{eq2}\\
&|S|=\s_0\text{ implies that }|N_1(S)|\geq \frac{n}{(\log\log n)^2},\label{eq3}
\end{align}
where $N_1(S)$ is the set of vertices not in $S$ that have a $\G_1$-neighbor in $S$.
\end{lemma}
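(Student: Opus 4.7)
The four statements are all routine first-moment/Chernoff computations, and my plan is to prove each by fixing $S$, applying a sharp concentration bound, and then union bounding over the choices of $S$ of the relevant size. Inequalities \eqref{eq1}, \eqref{eq1a}, and \eqref{eq2} concern $e(S)$ in $G_{n,p}$ with $p\sim \log n/n$, while \eqref{eq3} concerns $|N_1(S)|$ in $\G_1$ with $p_1\sim \log n/(2n)$.

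For the two ``dense'' edge-count bounds \eqref{eq1} and \eqref{eq1a}, the random variable $e(S)$ is binomial with mean $\m=\binom{s}{2}p\sim s^2\log n/(2n)$, and I would use Chernoff in the form $\Pr[e(S)\geq k\m]\leq (e/k)^{k\m}$, valid for $k>e$. For \eqref{eq1} I take $k=3(\log\log n)^2$, giving a tail exponent of order $s^2(\log n)(\log\log n)^2\log\log\log n/n$, which thanks to the threshold $s\geq \s_0=5n/(\log n\log\log n)$ comfortably dominates the entropy term $s\log(en/s)=O(s\log\log n)$ coming from $\binom{n}{s}$. For \eqref{eq1a} I take $k=4$, giving exponent of order $s^2\log n/n$; since $s\geq \s_1=n/(4000(\log\log n)^2)\gg n\log\log n/\log n$, this again beats the entropy bound $O(s\log\log n)$.

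For the small-set bound \eqref{eq2} I would use the direct inequality $\Pr[e(S)\geq m]\leq (es^2p/(2m))^m$ with $m=7s\log n/\log\log n$. Because $s\leq \s_0$ forces $n/s\geq (\log n\log\log n)/5$, one finds $es^2p/(2m)=es\log\log n/(14n)=O(1/\log n)$, so $(es^2p/(2m))^m\leq \exp(-(7-o(1))s\log n)$. Multiplying by $\binom{n}{s}\leq n^s$ yields probability at most $n^{-(6-o(1))s}$ per value of $s$, and this sums to $o(1)$ over $s\in\set{1,\ldots,\s_0}$.

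For the expansion bound \eqref{eq3}, for a fixed $S$ with $|S|=\s_0$ the quantity $|N_1(S)|$ is a sum over $v\notin S$ of independent indicators with success probability $q=1-(1-p_1)^{\s_0}\sim 5/(2\log\log n)$, so $\E|N_1(S)|\sim 5n/(2\log\log n)$. Since the target $n/(\log\log n)^2$ is of order $\E|N_1(S)|/\log\log n$, a standard lower-tail Chernoff bound gives $\Pr[|N_1(S)|\leq n/(\log\log n)^2]\leq \exp(-cn/\log\log n)$, which easily absorbs the union-bound factor $\binom{n}{\s_0}\leq \exp(\s_0\log(en/\s_0))=\exp(O(n/\log n))$. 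The main obstacle is purely bookkeeping: choosing constants in each regime so that the Chernoff exponent beats the entropy term $s\log(en/s)$. The most delicate piece is arguably \eqref{eq2}, where one also needs a union bound over $s$, but the per-$s$ tail estimate $n^{-(6-o(1))s}$ makes that outer sum trivially $o(1)$.
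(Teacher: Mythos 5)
Your approach is essentially the same as the paper's: a first-moment/union bound over choices of $S$, with a binomial tail estimate for $e(S)$ of the form $\binom{n}{s}\binom{s^2/2}{\a s}p^{\a s}$ (the paper parameterizes by edges-per-vertex $\a$ where you parameterize by a multiplicative factor above the mean, but these are the same computation), and a Chernoff lower tail for \eqref{eq3}. The constants work out the same way and the entropy-vs-exponent bookkeeping matches.

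The one place you gloss over a real issue is \eqref{eq3}. The graph $\G_1$ that appears in the lemma (via $N_1(S)$) is not a clean copy of $G_{n,p_1}$: by the time \eqref{eq3} is invoked, vertices have been repeatedly deleted from $\G_1$ until its minimum degree is at least $\tfrac{1}{80}\log n$, so you are looking at a graph on a shrunken vertex set $V(\G_1)$ conditioned on a minimum-degree event. Your computation treats $|N_1(S)|$ as a sum of $n-\s_0$ i.i.d.\ Bernoulli$(q)$ indicators, ignoring both the reduced vertex count and the conditioning. The paper handles this by (i) invoking Property 1 ($|V(\G_1)|\geq 2n/5$ throughout) so the binomial has $|V(\G_1)|-\s_0 \geq 2n/5-o(n)$ trials rather than $n-\s_0$, and (ii) applying the FKG inequality: conditioning on the increasing event ``minimum degree $\geq \log n/80$'' can only stochastically increase the increasing quantity $|N_1(S)|$, so the unconditional Chernoff lower tail still applies. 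Neither fix changes the final order of magnitude ($\exp(-\Omega(n/\log\log n))$ versus an entropy term $\exp(O(n/\log n))$), so your numbers survive, but without the FKG observation your stated argument literally applies to the wrong probability space.
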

\begin{proof}
Let $s=|S|$. Then, 
\begin{align}
\Pr(\exists S:e(S)\geq \a s)&\leq\binom{n}{s}\binom{s^2/2}{\a s}p^{\a s}\nn\\
&\leq \brac{\frac{ne}{s}\cdot\bfrac{se^{1+o(1)}\log n}{2\a n}^{\a}}^s.\label{alp}
\end{align}
For \eqref{eq1} we use $\a=3s\log n(\log\log n)^2/2n\geq 15\log\log n/2$, for \eqref{eq1a} we use $\a=2s\log n/n$ and for \eqref{eq2} we use $\a=7\log n/\log\log n$. In all cases the R.H.S. of \eqref{alp} is $o(1)$.

The edges in $\G_1$ are conditioned so that the minimum degree is at least $\log n/80$. Without the conditioning, if $|S|=\s_0$ then $|N_1(S)|$ is distributed as $Bin(|V(\G_1)|-\s_0,1-(1-p)^{\s_0})$. Now $1-(1-p)^{\s_0}\geq 1-e^{-5/\log\log n}\geq 4/\log\log n$. So, by Chernoff bounds and the FKG inequality and by Property 1,
\mults{
\Pr\brac{\exists S:|S|=\s_0,|N_1(S)|\leq \frac{n}{(\log\log n)^2}}\leq \binom{n}{\s_0}  \exp\set{-\frac{2n/5-\s_0}{2+o(1)}\cdot\frac{4}{\log\log n}}=\\
\exp\set{O(\s_0\log\log n)-\Omega\bfrac{n}{\log\log n}}=o(1).
}
\end{proof}
We now argue that Property 1 holds throughout.
\begin{lemma}\label{V1G}
W.h.p. we have $|V(\G_1)|\geq 2n/5$ thoughout.
\end{lemma}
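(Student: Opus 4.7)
Plan. I proceed by induction on the step index $v=1,\dots,s_0$: assuming $|V(\G_1)|\geq 2n/5$ at the start of step $v$, I show this is maintained after we remove the interior of $Q_{v-1}$ and peel back to minimum degree $>\log n/80$ in $\G_1$. The key per-step bound is $|P_v|\leq Cq$ for an absolute constant $C$, where $q=4\log n/\log\log\log n$ bounds the interior of $Q_{v-1}$ and $P_v$ is the peel set at step $v$. Summing over the $s_0=\omega_1 n/\log n$ steps gives total peel $O(s_0 q)=O(\omega_1 n/\log\log\log n)=o(n)$, and together with $|\AVOID_0|+|I|=o(n)$ this yields $|V(\G_1)|\geq n-o(n)\gg 2n/5$.

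For the peel inequality: each $u\in P_v$ satisfies $u\notin\TINY_0$ (since $\TINY_0\subseteq\AVOID_0$ was removed first), so $d_{\G_1}(u)>\log n/40$; while its degree in the currently remaining graph at peel time is $\leq\log n/80$, so $u$ has lost at least $\log n/80$ $\G_1$-neighbors to the previously removed set $S_v\cup P_{v,<u}$. Introducing the exceptional set $L_v=\{u\in V_v : d_{\G_1[V_v]}(u)\leq \log n/4\}$, for $u\in P_v\setminus L_v$ the loss is $\geq 19\log n/80$. Summing these losses and using $|S_v|\leq q$, $\Delta\leq 5\log n$, and $e_{\G_1}(P_v)\leq 7|P_v|\log n/\log\log n$ from \eqref{eq2} (valid once $|P_v|\leq\sigma_0$, which is consistent with the final bound $|P_v|=O(q)$ since $q\ll\sigma_0$) yields
\[
|P_v\setminus L_v|\cdot 19/80 \leq 5q + 7|P_v|/\log\log n,
\]
which rearranges to $|P_v\setminus L_v|\leq 22q$ up to $o(1)$ corrections.

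Controlling $|L_v|$ is the delicate part. The direct edge count $(\log n/4)\cdot|L_v|\leq e_{\G_1}(V_v,V\setminus V_v)\leq \sum_{w\in V\setminus V_v}d_{\G_1}(w)$ is made small by noting that the dominant contribution to $V\setminus V_v$ is $\TINY_0$, whose members have $d_{\G_1}\leq\log n/40$ (hence total edge contribution $O(n^{2/3}\log n)$), with the rest made up of $\SMALL\cup N(\SMALL)$, path interiors $I_{<v}$ of total size $o(n)$, and previously peeled vertices $B_{<v}$. This gives $|L_v|=o(n)+O(|B_{<v}|)$. To avoid a geometric blow-up across steps one exploits that the vertices of $B_{<v}$ were peeled precisely because most of their $\G_1$-neighbors lie in $R$, not in $V_v$, so their contribution to $e_{\G_1}(V_v,V\setminus V_v)$ is much smaller than the naive bound $5|B_{<v}|\log n$. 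Peelings inside $L_v$ itself are bounded by a parallel inequality: each $u\in L_v\cap P_v$ has $d_{\G_1[V_v]}(u)\geq\log n/80+1$ and so loses at least one neighbor at peel time, and combining this with \eqref{eq2} gives $|L_v\cap P_v|=O(q)$ as well.

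The main obstacle is precisely the uniform control of $|L_v|$ across the $\omega_1 n/\log n$ steps. The naive per-step bound $|P_v|=O(q+|L_v|)$ with $|L_v|$ depending linearly on the accumulated peel $|B_{<v}|$ would compound geometrically. Avoiding this requires the refined bookkeeping on $V\setminus V_v$ described above, exploiting both that $\TINY_0$ (the bulk by count) contributes few edges thanks to its low degrees, and that $B_{<v}$ vertices, though potentially of maximum degree $5\log n$, have most of their edges pointing inside $R$ by virtue of having been peeled.
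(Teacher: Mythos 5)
Your approach is a step-by-step induction, quite different from the paper's, and it does not close. The paper's proof is a short global argument: it never tracks the process step by step, but simply notes that removing the $\gamma n$ path-interior vertices costs at most $5\gamma n\log n$ edges (maximum degree) and peeling $\kappa n$ low-degree vertices costs at most $\kappa n\log n/80$ edges; comparing what remains against the whp density bound $e(S)\leq 2|S|^2\log n/n$ from \eqref{eq1a}, applied to the surviving vertex set $S$ of size $(1-\gamma-\kappa)n$, forces $\kappa$ to stay below roughly $3/5$, giving $|V(\G_1)|\geq 2n/5$. The order in which vertices are removed is irrelevant to this count, so all the per-step bookkeeping about $L_v$, $B_{<v}$, and cascades is unnecessary.

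More importantly, your version has genuine gaps. (i) The ``parallel inequality'' claim $|L_v\cap P_v|=O(q)$ does not follow from ``each $u\in L_v\cap P_v$ loses at least one neighbor''; that observation only yields $|L_v\cap P_v|\leq 5q\log n+e_{\G_1}(P_v)$, which is $\Omega(q\log n)$, not $O(q)$, and the whole scheme leans on that unproved $O(q)$ bound. (ii) Invoking \eqref{eq2} to bound $e_{\G_1}(P_v)$ presupposes $|P_v|\leq\sigma_0$, which is exactly what you are trying to establish; you would need an explicit stopping/minimality argument to break this circularity, and you do not give one. (iii) You yourself label the uniform control of $|L_v|$ as ``the main obstacle,'' and your proposed fix --- that $\TINY_0$ contributes few edges and that peeled vertices have most of their edges pointing into the removed set --- is a direction, not an argument; any per-step recursion of the form $|P_v|\leq O(q)+c\,|B_{<v}|$ with fixed $c>0$ compounds as $(1+c)^{s_0}$ over $s_0=\omega_1 n/\log n$ steps, which is astronomically larger than $n$. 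The clean way out is precisely the paper's: amortize the edge count over the whole process rather than bounding each step.
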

\begin{proof}
Suppose we delete $\g n$ vertices belonging to paths and then repeatedly remove vertices of degree at most $\frac{1}{80}\log n$ from $\G_1$. Here $\g n$ bounds the the total length of all the paths $Q_1,Q_2,\ldots,Q_{s_0}$ and is $o(n)$, see \eqref{pathsize}.  Initially, $\G_1$ has at least $(1/2-o(1))n\log n/2$ edges and after $\k n$ small-degree-vertex removals $\G_1$ has $(1-\g-\k)n$ vertices and at least $(1/4-o(1)-5\g-\k/80)n\log n$ edges. (We lose at most $5\log n$ edges per path vertex and at most $\log n/80$ edges per low degree vertex.) It follows from \eqref{eq1a} that w.h.p. 
\[
\brac{\frac14-o(1)-\frac{\k}{80}}\leq \frac{3(1-\g-\k)^2}{2}\text{ implying that }\brac{\frac14-o(1)-\frac{1}{80}}\leq \frac{3(1-\g-\k)^2}{2}.
\]
It follows from this that w.h.p. $\G_1$ still has at least $(1-\g-\k)n\geq 2n/5$ vertices.
\end{proof}
This verifies Property 1.
\begin{lemma}\label{V2G}
W.h.p., the diameter of $\G_1$ is at most $4\log n/\log\log\log n$ thoughout.
\end{lemma}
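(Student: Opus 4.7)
The plan is to bound the diameter of $\G_1$ via rapid BFS expansion. At every stage of the construction, $\G_1$ has minimum degree at least $\tfrac{1}{80}\log n$ by the removal rule and maximum degree at most $5\log n$ since $\G_1\sm\es$ is contained in $G_{n,p}$; Lemma~\ref{V1G} guarantees $|V(\G_1)|\geq 2n/5$; and the density bounds \eqref{eq1}--\eqref{eq3} are really statements about $G_{n,p}$, so they hold for every subset of vertices that remains active. Fix an active vertex $u\in V(\G_1)$ and grow a BFS ball $B_i$ in $\G_1$ from $u$; the goal is to show $B_i=V(\G_1)$ for some $i\leq 4\log n/\log\log\log n$.

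The main step is to show that while $|B_i|\leq\s_0=5n/(\log n\log\log n)$ we have $|B_{i+1}\setminus B_i|\geq |B_i|\log\log n$. By \eqref{eq2}, $e(B_i)\leq 7|B_i|\log n/\log\log n$, so the number of $\G_1$-edges leaving $B_i$ is at least $|B_i|(\tfrac{1}{80}\log n-14\log n/\log\log n)\geq|B_i|\log n/100$ for large $n$. To convert this edge count into distinct external neighbors, split $V(\G_1)\setminus B_i$ into \emph{heavy} vertices (those receiving more than $\log n/(\log\log n)^2$ $\G_1$-edges from $B_i$) and \emph{light} ones. A Chernoff plus union-bound calculation, run uniformly over all $|B_i|\leq\s_0$, shows that w.h.p.\ at most $|B_i|/\log\log n$ outside vertices are heavy; the light vertices then absorb at least $|B_i|\log n/200$ edges and, receiving at most $\log n/(\log\log n)^2$ each, contribute at least $|B_i|(\log\log n)^2/200\geq|B_i|\log\log n$ new BFS vertices.

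The remaining work is lower order. The first step at which $|B_i|\geq\s_0$ yields $|B_{i+1}|\geq n/(\log\log n)^2$ by applying \eqref{eq3} to any $\s_0$-subset of $B_i$. From there, repeating the heavy/light split with \eqref{eq1a} in place of \eqref{eq2} gives expansion by a factor of at least $\log\log n$ until $|B_i|$ is a constant fraction of $V(\G_1)$; a final application of \eqref{eq2} to the complement of $B_i$ (rather than to $B_i$ itself) rules out any residual set of size at most $\s_0$ having no edge to $B_i$, so BFS exhausts $V(\G_1)$ within a bounded number of further steps. Summing, the number of steps is at most $\log\s_0/\log\log\log n+O(1)\leq 4\log n/\log\log\log n$.

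The main technical hurdle is the uniform heavy-set bound underlying the Phase~1 expansion. The Chernoff exponent available when fixing an external vertex to be heavy is of order $\log n/\log\log n$, while the union bound over all candidate $B_i$ of a given size costs $\log\binom{n}{|B_i|}=O(|B_i|\log n)$. These quantities just barely balance when one demands only expansion by $\log\log n$, and this slack (relative to the true expansion factor of order $\log n$) is exactly what forces the denominator $\log\log\log n$, rather than $\log\log n$, in the stated diameter bound. Because the heavy-set bound is a statement about the full $G_{n,p}$, it can be established once and for all and then transferred to every stage of the construction deterministically, which is how we make the argument hold uniformly in time.
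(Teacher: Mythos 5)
Your overall plan—BFS expansion driven by the density bounds, giving a growth factor of order $\log\log n$ per layer—is in the same spirit as the paper, and the $\log\log\log n$ in the denominator does indeed come from $\log(\log\log n)$ being the log of the per-step growth factor. But you take a materially different and more complicated route, and the endgame of your argument has gaps.

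The paper avoids your heavy/light decomposition entirely. Instead of counting edges \emph{leaving} a ball and converting them to distinct new vertices (which is where you need a Chernoff plus union bound over all candidate balls and heavy sets), it applies \eqref{eq2} to the \emph{union} $S=S_i\cup S_{i+1}$ of two consecutive BFS layers. The minimum degree $\tfrac{1}{80}\log n$ of vertices in $S_i$, all of whose $\G_1$-edges land in $S_{i-1}\cup S_i\cup S_{i+1}$, gives $e(S)\geq |S_i|\log n/160$ up to lower-order corrections, while \eqref{eq2} gives $e(S)\leq 7|S|\log n/\log\log n$. Combining the two yields $|S_{i+1}|\geq |S_i|\log\log n/2000$ directly, with no extra probabilistic lemma. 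Your heavy/light split (with threshold $\log n/(\log\log n)^2$) does appear to survive the union bound when $|B_i|\le\s_0$, so Phase~1 of your argument is repairable, but it is an unnecessary detour.

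More seriously, the paper does a \emph{two-sided} BFS from $x$ and $y$ and, once both $S_{i_0}$ and $T_{i_0}$ have reached size $\s_0$, applies \eqref{eq3} to get $|N_1(S)|,|N_1(T)|\geq n/(\log\log n)^2$, and then closes the argument with a single union bound over pairs of sets of this size showing that in $\G_1$ they must intersect or be joined by an edge. You instead grow one ball $B_i$ all the way to $V(\G_1)$, and your account of this phase does not hold up. Once $|B_i|\geq n/(\log\log n)^2$, an outside vertex has expected $\G_1$-degree to $B_i$ of order at least $\log n/(\log\log n)^2$, which matches your heavy threshold, so ``repeating the heavy/light split with \eqref{eq1a}'' does not give expansion by $\log\log n$; the threshold and the Chernoff/union-bound trade-off would have to be redone in this regime. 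Likewise, ``a final application of \eqref{eq2} to the complement of $B_i$ \ldots rules out any residual set of size at most $\s_0$ having no edge to $B_i$'' is not what \eqref{eq2} gives you: it shows only that the fraction of the residual with no $\G_1$-neighbor in $B_i$ is $O(1/\log\log n)$, so the residual shrinks by a $\log\log n$ factor per step, and exhausting it costs another $\Theta(\log n/\log\log\log n)$ steps, not a bounded number. This is still within the stated budget with more careful bookkeeping, but as written the claim is wrong, and altogether the one-sided scheme needs substantially more argument than the paper's two-sided meeting-in-the-middle, which handles the whole endgame in a single step.

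Finally, one small correction to your framing: \eqref{eq3} is not a statement purely about $G_{n,p}$ that transfers deterministically to every stage, as you assert; the paper's proof of \eqref{eq3} conditions on the evolving $\G_1$ having minimum degree $\tfrac{1}{80}\log n$ and invokes Property~1. So uniformity in time there comes from carrying Property~1 as an invariant, not from a one-shot statement about the underlying $G_{n,p}$.
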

\begin{proof}
Fix $x,y\in V(\G_1)$ and let $S_i$ denote the set of vertices at distance $i$ from $x$ in $\G_1$ and define $T_i$ similarly for $y$. Fix $i$ and let $S=S_i\cup S_{i+1}$. Suppose that $|S|\leq \frac{5n}{\log n\log\log n}$. The minimum degree in $S$ is at least $\log n/80$ and given \eqref{eq2}, we have
\[
\frac{|S_i|\log n}{160}\leq e(S)\leq \frac{7|S|\log n}{\log\log n}.
\]
It follows that $|S_{i+1}|\geq \frac{1}{2000}|S_i|\log\log n$. Let $i_0\leq 3\log n/2\log\log\log n$ be the smallest positive integer such that $(\frac{1}{2000}\log\log n)^i\geq 5n/\log n\log\log n$. If $S_{i_0}\cap T_{i_0}\neq\emptyset$ then there is a path of length at most $4\log n/\log\log\log n$ from $x$ to $y$ in $\G_1$. Now choose sets $S\subseteq S_{i_0},T\subseteq T_{i_0}$ of size exactly $\s_0$. On the other hand, $|N_1(S)|,|N_1(T)|\geq n/(\log\log n)^2$ and in $G_1$,
\begin{align*}
&\Pr\brac{\exists S,T;|S|,|T|\geq \frac{n}{(\log\log n)^2},e(S:T)=S\cap T=\emptyset}\\
&\leq \binom{n}{n/(\log\log n)^2}^2(1-p_1)^{(n/(\log\log n)^2)^2}\\
&\leq \brac{e(\log\log n)^2}^{2n/(\log\log n)^2}\exp\set{-\frac{ n\log n}{2(\log\log n)^4}}=o(1).
\end{align*}
It follows that w.h.p. the diameter of $\G_1$ is $4\log n/\log\log\log n$. 
\end{proof}
This verifies Property 2.

Now we argue that we can construct the paths $Q_i$.  Given Properties 1 and 2, we use the edges of $\G_2$ to find a short paths from $x_i$ and $y_i$ to what's left of $V(\G_1)$ at the time of the construction of $Q_i$. The path $Q_i$ will be decomposed into $Q_{i,j}, j=1,2,3$ where $Q_{i,1}$ uses $\G_2$-edges and is from $x_i$ to $u_i\in V(\G_1)$, $Q_{i,2}$ is from $u_i$ to $v_i\in V(\G_1)$ in $\G_1$ and $Q_{i,3}$ uses $\G_2$-edges and is from $v_i$ to $y_i$. We let $A_j$ denote the set of vertices at distance exactly $j$ from $x_i$ using paths that avoid using $\bigcup_{j=1}^{i-1}V(Q_j)$. Because $x_i\notin \TINY$, we have $|A_1|\geq \log n/40$ and  then given $A_j$, $|A_{j+1}|$ dominates the binomial $Bin(|V(\G_1)|-|A_1|-\cdots-|A_{j-1}|-o(n),1-(1-p_2)^{|A_j|})$. So w.h.p. $|A_2|=\Omega(\log^2n)$ and $A_3\cap V(\G_1)\neq \emptyset$, using Lemma \ref{V1G}. This verifies the existence of $Q_{i,1}$ and $Q_{i,3}$ is dealt with similarly. 

In summary Step 2 constructs a path $P^*$ of length $O(s_0\log n/\log\log\log n)=o(n)$.
\subsection{Analysis of Step 3}
We only need to verify that $x^*=x_1,y^*=y_{s_0}\notin \TINY$. $x_1^*\notin \TINY$ because it was not in $\TINY_0$ and we avoid using vertices in $N(x_1)$. $y_{s_0}$ is selected to be not in $\TINY$ at the end of the process.
\subsection{Analysis of Step 4}
Let $G_1^*$ be obtained from $G_1$ after contracting $P^*$ to an edge $e^*=\set{x^*,y^*}$ and deleting any edge $\set{u,v}$ that is incident with an interior vertex of $P^*$, but is not an edge of $P^*$. We let $V_1^*=V(G_1^*)$. We note that the minimum degree in $G_1^*$ is at least 2. We then let $G_3^*=G_1^*\cup \G_3^*$, where $\G_j^*,j=3,4$ is the subgraph of $G_j$ induced by $V_1^*$. 

We consider the usual extension-rotation algorithm for finding a Hamilton cycle. We apply it to $G_3^*$ and we use $\G_4^*$ as {\em boosters}. We begin with a longest path in $P_0=(x_1,x_2,\ldots,x_k)$ in $G_3^*$ that contains $e^*$ and we consider {\em restricted} rotations that do not delete $e^*$. Given a path $P=(x_1,x_2,\ldots,x_k)$ and an edge $\set{x_k,x_i}$ where $1<i<k-1$ we say that the path $Q=(x_1,\ldots,x_{i-1},x_i,x_k,x_{k-1},\ldots,x_{i+1})$ is obtained from $P$ by a restricted rotation if $e^*\neq \{x_i,x_{i+1}\}$. $x_1$ is called the fixed endpoint.

Suppose then that $\END$ is the set of vertices that occur as endpoints of paths obtainable from $P_0$ by a sequence of restricted rotations. Since $P_0$ was a longest path containing $e^*$, the neighbors of $\END$ in $G_3^*$ are all vertices of $P_0$. We show that we have something close to the usual Pos\'a property. For $S\subseteq V_1^*$ we let $N^*(S)$ denote the neighbors of $S$ in $G_1^*$. 
\begin{lemma}\label{posa}
$|N^*(\END)|\leq 2|\END|+1$.
\end{lemma}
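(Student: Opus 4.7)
The statement is a version of P\'osa's rotation lemma in the restricted-rotation setting (where the edge $e^*$ is preserved throughout), and the proof follows the classical argument with minor adaptations. First, for each $v \in \END$, fix a representative longest path $P_v = (v^v_0, v^v_1, \ldots, v^v_l)$ in $G_3^*$ with $v^v_0 = x_1$ and $v^v_l = v$, obtained from $P_0$ by some sequence of restricted rotations; these representatives can be organized into a rotation tree rooted at $P_0$. Since $P_v$ is a longest path in $G_3^* \supseteq G_1^*$, every $G_1^*$-neighbor of $v$ lies on $P_v$, so it may be written as $v^v_i$ for a unique position $i$. Whenever $0 < i < l-1$ and $\{v^v_i, v^v_{i+1}\} \neq e^*$, a restricted rotation at the edge $\{v, v^v_i\}$ produces a new path with variable endpoint $\sigma_v(v^v_i) := v^v_{i+1}$, which therefore lies in $\END$.

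For each fixed $v$, the only $G_1^*$-neighbors of $v$ that do not yield an $\END$-successor under $\sigma_v$ are the path-neighbor $v^v_{l-1}$ of $v$ and at most one vertex incident to the blocked copy of $e^*$ on $P_v$; since $y^* \in \END$, such a blocked vertex can only lie outside $\END$ when it coincides with $x^* = x_1$.

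The plan is then to define a charging map $\phi$ on $N^*(\END)$ with values in $\END \cup \{x_1\}$ by: (a) charging $x_1$ to itself when $x_1 \in N^*(\END)$; (b) for each other $w \in N^*(\END)$, selecting a canonical $v(w) \in \END$ with $\{v(w), w\} \in G_1^*$ using the rotation tree, and charging $w$ to $v(w)$ itself if $w$ is the path-neighbor of $v(w)$ on $P_{v(w)}$, or to $\sigma_{v(w)}(w) \in \END$ otherwise. One then shows that each $z \in \END$ receives at most two charges from $\phi$ and $x_1$ receives at most one, which is enough to conclude $|N^*(\END)| \leq 2|\END| + 1$.

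The main obstacle is this last step, the fiber bound for $\phi$. Controlling how many $w$ can map to a given $z \in \END$ requires exploiting the tree structure: one argues that each $z$ is a rotation-successor along at most one of its two path-neighbors (under the canonical choice), while being the endpoint of which some $w$ is a path-neighbor for at most one additional $w$. The restriction avoiding $e^*$ contributes only a bounded correction (the $+1$), since the blocked edge can only appear in one place on each $P_v$ and, when it does, the offending vertex is forced to be $x_1$ or $y^*$.
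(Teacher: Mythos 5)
Your proposal takes a genuinely different route from the paper, and it leaves a real gap at precisely the step you flag as the ``main obstacle.'' The paper's proof counts along the \emph{original} path $P_0$ rather than along per-vertex representative paths. Concretely, the paper shows that every $v\in N^*(\END)\setminus(\END\cup\set{x^*,y^*})$ has a $P_0$-neighbor in $\END$: when the rotations have produced a path $P_u$ with endpoint $u\in\END$ adjacent to $v$, a further restricted rotation along the edge $\set{u,v}$ puts one of $v$'s $P_u$-neighbors into $\END$; and if that $P_u$-neighbor is not a $P_0$-neighbor of $v$, then some earlier rotation must already have deleted a $P_0$-edge $\set{x,v}\neq e^*$, and deleting that edge made $x$ or $v$ an endpoint --- since $v\notin\END$ by hypothesis, $x\in\END$. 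Either way $v$ has a $P_0$-neighbor in $\END$, and since $P_0$-adjacency is a fixed structure of maximum degree $2$ (with the variable endpoint $x_k\in\END$ having only one $P_0$-neighbor), this immediately gives $|N^*(\END)\setminus\set{x^*,y^*}|\leq 2|\END|-1$, hence the lemma. The $e^*$-restriction never interferes because $v\notin\set{x^*,y^*}$ means no edge at $v$ equals $e^*$.

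Your charging map $\phi$ instead works with $P_{v(w)}$-adjacency, and the fiber bound ``each $z\in\END$ receives at most two charges'' is exactly the assertion that needs proof but has no clean justification in your framework: a fixed $z\in\END$ can arise as $\sigma_{v(w)}(w)$ for many distinct pairs $(v(w),w)$, because different representative paths $P_v$ can place different vertices $w$ immediately before the same $z$. Nothing in the rotation-tree structure alone bounds this multiplicity; the classical and the paper's argument both sidestep the issue by reducing to $P_0$-adjacency, where the degree bound is automatic. Two smaller problems: the assertion ``since $y^*\in\END$'' is unjustified --- $y^*$ is in general an interior vertex of $P_0$ and need not be reachable as a variable endpoint; and the identification ``$x^*=x_1$'' conflates the endpoint $x^*$ of the contracted path $P^*$ with the fixed endpoint $x_1$ of the longest path $P_0$, which are different vertices in general (the paper's choice of letters is unfortunately overloaded here). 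I would recommend abandoning the per-vertex-path charging and reproving the claim via $P_0$-adjacency as the paper does.
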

\begin{proof}
We show that
\beq{end}{ 
|N^*(\END)\setminus \set{x^*,y^*}|\leq 2|\END|-1,
}
This implies the lemma. If $u\in \END$ and $v\in P_0\setminus (\END\cup\set{x^*,y^*})$ is a neighbor of $u$ in $G_1$ then one of $v$'s neighbors in $P_0$ must be in $\END$. This is because when the rotations produce a path $P_u$ with $u$ as an endpoint, another rotation will make one of $v$'s $P_v$ neighbors an endpoint. If both of these neighbors are $P_0$ neighbors then we are done. Otherwise the rotations have deleted a $P_0$-edge $\set{x,v}\neq e^*$ containing $v$. This means $x$ or $v$ is in $\END$. Our hypothesis excludes $v\in \END$. This completes the proof of \eqref{end}.
\end{proof}
To apply the usual arguments, we prove
\begin{lemma}\label{conn}
The following hold w.h.p.: 
\begin{enumerate}[(a)]
\item $S\subseteq V_1^*,|S|\leq n/6000$ implies that $|N^*(S)|\geq\sum_{v\in S_1}d(v)+2|S_2|$, where $S_1=S\cap \SMALL$ and $S_2=S\setminus S_1$.
\item $G_3^*$ is connected.
\end{enumerate}
\end{lemma}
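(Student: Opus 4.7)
The plan is to prove part (a) by a first-moment Pósa-style expansion argument that exploits the structural properties S(i)--S(iii) of $\SMALL$, and then derive part (b) from (a) plus a cut-probability bound for $\G_3^*$.

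For (a), I would work in $G_1$; the passage to $G_1^*$ only deletes the $o(n)$ interior vertices of $P^*$ and a bounded number of edges per vertex, and this loss is absorbed in the slack produced below. Write $S = S_1 \cup S_2$. By S(ii), distinct $u,v \in \SMALL$ satisfy $\mathrm{dist}_{G_1}(u,v)\geq 5$, so the sets $\set{N_{G_1}(v):v\in S_1}$ are pairwise disjoint and all contained in $V\setminus \SMALL$; in particular $|N(S_1)|=\sum_{v\in S_1}d(v)$, $N(S_1)\cap S_1=\es$, and the set $W:=S_1\cup N(S_1)$ has size at most $|\SMALL|\brac{1+\log n/40}=O(n^{1/3}\log n)=o(n)$ by S(i). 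A standard first-moment union bound in $G_1$ (which contains a copy of $G_{n,p_1}$ with $p_1\sim p/2$) over pairs $(S_2,T)$ with $S_2\seq V\setminus\SMALL$, $|S_2|\leq n/6000$, and $|T|=o(n)$ shows that w.h.p.\ $|N_{G_1}(S_2)\setminus (S_2\cup T)|\geq 3|S_2|$ for every such $T$; this is the standard calculation because every $v\in S_2$ has $G_1$-degree $\geq \log n/40$. Applying this with $T=W$ and combining with the $S_1$ bound $|N(S_1)\setminus S|\geq \sum_{v\in S_1}d(v)-|S_2|$ yields
\[
|N_{G_1}(S)\setminus S|\geq \brac{\sum_{v\in S_1}d(v)-|S_2|}+3|S_2|=\sum_{v\in S_1}d(v)+2|S_2|,
\]
giving (a).

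For (b), suppose for contradiction that $G_3^*$ has a connected component $C$ with $1\leq |C|\leq n/2$. If $|C|\leq n/6000$ then, since $G_3^*\supseteq G_1^*$, every $G_1^*$-neighbor of $C$ already lies in $C$, so $N^*(C)\seq\es$; but (a) gives $|N^*(C)|\geq \sum_{v\in C_1}d(v)+2|C_2|\geq 2$ (using the minimum-degree-$2$ property of $G_1^*$ to cover both the case $C_1\neq\es$ and $C_1=\es$), a contradiction. If $n/6000\leq |C|\leq n/2$, set $A=C$ and $B=V_1^*\setminus C$; then $|A|,|B|\geq n/6000-o(n)$ and no $\G_3^*$-edge crosses $(A,B)$. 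The probability of this for a specific such cut is at most $(1-p_3)^{|A||B|}\leq \exp\set{-p_3\cdot n^2/(2\cdot 6000^2)}=\exp\set{-\Omega(\om n)}$, and a union bound over the at most $2^n$ possible cuts gives total failure probability $\exp\set{n\log 2-\Omega(\om n)}=o(1)$ since $\om\to\infty$.

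The main obstacle is the first-moment step in (a): one must make the bound uniform over sets $T$ of size up to $o(n)$ while keeping the $\sum_{v\in S_1}d(v)$ term exact, rather than settling for the weaker standard Pósa conclusion $|N(S)|\geq 2|S|$. The properties S(i) and S(ii) are essential precisely because they let us handle the $\SMALL$ vertices via a deterministic disjointness argument and reduce the probabilistic calculation to the standard Pósa case on $S_2\seq V\setminus \SMALL$, where every vertex has logarithmic degree.
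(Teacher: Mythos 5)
Your part (b) argument is correct and in fact simpler than the paper's: you take a direct union bound over cuts using the $\G_3^*$ edges (with $p_3=\om/4n$ and $\om\to\infty$), whereas the paper first establishes that $\G_3^*$ has a unique giant component of size $m-o(n)$ plus components of size $\leq ne^{-\om/10}$, and then combines this with the minimum component size $\geq n/6000$ obtained from (a). Both routes are valid; yours trades a slightly worse constant requirement on $\om$ for a shorter computation.

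Part (a), however, has a genuine gap, and it is precisely the step you flag at the end. The claim that, w.h.p., $|N_{G_1}(S_2)\setminus(S_2\cup T)|\geq 3|S_2|$ simultaneously for all $S_2$ with $|S_2|\leq n/6000$ and all $T$ with $|T|=o(n)$ is \emph{false}, not merely hard to prove by a first moment over pairs $(S_2,T)$. Take $S_2=\set{v}$ for any single vertex $v\notin\SMALL$ and $T=N_{G_1}(v)$: then $|N_{G_1}(S_2)\setminus(S_2\cup T)|=0$, while $3|S_2|=3$. More generally, for any $S_2$ an adversarial $T$ of size $o(n)$ can absorb all of $N(S_2)$ whenever $|N(S_2)|=o(n)$, which happens for all $|S_2|$ up to $\Theta(n/\log n)$. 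So no amount of union-bounding can rescue the statement as written; it has to be replaced by a statement in which $T$ is not arbitrary.

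The paper avoids this by never quantifying over $T$ at all. It proves a one-parameter expansion bound $|N^*(S_2)|\geq 5|S_2|$ for all $S_2\subseteq V_1^*\setminus\SMALL$ with $|S_2|\leq n/6000$ (using \eqref{eq1}, \eqref{eq1a}, \eqref{eq2}, a union bound only over $S_2$), and then controls the interaction with $S_1$ by an inclusion-exclusion
\[
|N^*(S)|\geq |N^*(S_1)|+|N^*(S_2)|-|N^*(S_1)\cap S_2|-|N^*(S_2)\cap S_1|-|N^*(S_1)\cap N^*(S_2)|,
\]
bounding each overlap term deterministically by $|S_2|$ via S(ii) and S(iii): a vertex of $S_2$ can be adjacent to at most one $\SMALL$ vertex (else two $\SMALL$ vertices lie at distance $2$), and can contribute at most one vertex to $N^*(S_1)$ (else one gets either a short cycle through a $\SMALL$ vertex, or a path of length $4$ between two $\SMALL$ vertices). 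If you want to keep your decomposition, the fix is the same idea: do not union over $T$, but instead use S(ii) and S(iii) to show deterministically that $|N(S_2)\cap(\SMALL\cup N(\SMALL))|\leq 2|S_2|$, and then subtract that from your $5|S_2|$ expansion bound.
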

\begin{proof}
(a) Suppose first that $S\cap \SMALL=\emptyset$ and $|N^*(S)|\leq 5|S|$. Let $T=S\cup N^*(S)$. Vertices in $S$ have degree at least $\log n/40$ in $G_3^*$ and so $e(T)\geq |S|\log n/80\geq |T|\log n/480$. If $|T|\leq \frac{5n}{\log n\log\log n}$ then this contradicts \eqref{eq2}. Otherwise, \eqref{eq1} is contradicted, unless $3|T|^2\log n(\log\log n)^2/2n\geq |T|\log n/480$ which implies that $|S|\geq n/4000(\log\log n)^2$. But if $|T|\geq n/4000(\log\log n)^2$ then \eqref{eq1a} implies that $|T|\geq n/960$ and then $|S|\geq n/6000$.

Suppose now that $S\subseteq V_1^*$ with $|S|\leq n/6000$. Then from the properties S(ii), S(iii) claimed at the beginning of the proof of Theorem \ref{th3}, we have 
\begin{align}
|N^*(S)|&\geq |N^*(S_1)|+|N^*(S_2)|-|N^*(S_1)\cap S_2|-|N^*(S_2)\cap S_1|-|N^*(S_1)\cap N^*(S_2)|\nn\\
&\geq \sum_{v\in S_1}d(v)+5|S_2|-|S_2|-|S_2|-|S_2|\nn\\
&=\sum_{v\in S_1}d(v)+2|S_2|.\label{low}
\end{align}
(b) We first claim that w.h.p., the graph $\G_3^*$ consists of a giant component plus $o(n)$ small components of size at most $n_0=ne^{-\om/10}$.  To verify the claim, let $X_k$ denote the number of components in $\G_3^*$ of size $k\in[n_0,m]$, where $m=|V_1^*|$. Then
\[
\E\brac{\sum_{k=n_0}^{m/2}X_k}\leq \sum_{k=n_0}^{m/2}\binom{m}{k}k^{k-2}\bfrac{\om}{4n}^{k-1}\brac{1-\frac{\om}{4n}}^{k(m-k)}\leq m\sum_{k=n_0}^{m/2}\bfrac{e^{1-\om/8}k\om}{n_0}^k\leq n^2(e^{-\om/50})^{n_0}=o(1).
\]
So, w.h.p. there are no components of size in the range $[n_0,m/2]$. We also have
\[
\E\brac{\sum_{k=1}^{n_0}kX_k}\leq m\sum_{k=1}^{n_0}k\bfrac{e^{1-\om/8}\om}{k}^k=O(ne^{-\om/10}).
\]
The Markov inequality implies that there are $o(n)$ vertices in components of size at most $n_0$. So w.h.p. $\G_3^*$ has a unique giant component of size $m-o(n)$.

Part (a) shows that the minimum component size in $G_3^*$ is at least $n/6000$. This combined with the fact that $\G_3^*$ consists of a giant component of size $m-o(n)$ proves Part (b).
\end{proof}
Now because $d_{G_3^*}(v)\geq 2$ for $v\in[n]$, we can see from Lemma \ref{posa} and \eqref{low} that $|\END|>n/6000$ unless $\END\subseteq \SMALL$ and at most one vertex of $\END$ has degree more than 2. We rule out this possibility. Suppose that $v\in \END\cap \SMALL$. Then $v$ is not adjacent to $x^*$ or $y^*$, by construction. One more rotation will bring a vertex of $[n]\setminus \SMALL$ into $\END$, contradiction.

It follows from Lemmas \ref{posa} and \ref{conn}(b) that w.h.p. $|\END|\geq cn,c=1/6000$. For each $v\in \END$, we can define a set $\END(v)$ of at least $cn$ vertices obtainable by doing rotations with $v$ as the fixed endpoint. 

We can now use a standard argument, see for example Chapter 6.2 of \cite{FrKa}, to use $\G_4^*$ to create the required Hamilton cycle. It will be convenenient to replace the edges of $\G_4^*$ by $\m=\om n/10$ random edges $\set{f_1,f_2,\ldots,f_\m}$. These edges are independent of $G_3^*$. Starting with $i=0$ we construct a sequence of paths $P_0,P_1,\ldots,P_s$ where $s$ is a Hamilton path and construct a Hamilton cycle from there. Given $P_i$ we do restricted rotations until either (i) we construct a path $P$, one of whose endpoints has a neighbor outside $P$ or (ii) we construct at least $cn$ sets $\END(v)$, each of size at least $cn$. In the former case (i) we just extend $P$ to a path $P_{i+1}$ which has one more vertex than $P$. In the latter case (ii) we go to the next edge $f_j$ in the sequence $f_1,f_2,\ldots,f_\m$ to see if it is of the form $\set{x,y},x\in \END(y)$. This closes a path to a cycle. The probability of this is at least $c^2$. Given such a cycle $C$ and the fact that $G_3^*$ is connected, there are two possibilities: (a) $C$ is a Hamilton cycle or (b) there is an edge $\set{x,y}$ such that $x\in V(C)$ and $y\notin V(C)$. We can delete an edge $e\neq e^*$ of $C$ such that we obtain a new path with endpoint $y$ that is one edge longer than $P_i$. This will be our $P_{i+1}$. The probability this process fails is at most the probability that $\m$ trials with success probability $c^2$ fails to produce $n$ successes, which is $e^{-\Omega(n)}$.

This completes the proof of Theorem \ref{th3}.
\section{Proof of Theorem \ref{th4}}
We will use Theorem \ref{th3a} to prove this. We must first prove bounds on the number of Hamilton cycles $H$ with a bound on $\i(H)$. Denote this upper bound on $\i(H)$ by $M$. For a sequence $\bi=(\s_1,\s_2,\ldots,\s_n)$ we let $\m_k=|\set{j<k:\s_j>\s_k}|$ for $k=1,2,\ldots,n$. Then $0\leq \mu_j<j$ for each $j$, and $\i(\bi)=\sum_{j=1}^n\m_j$.  In particular, we have that
\beq{trans}{
|\set{\bi:\i(\bi)\leq M}|=\card{\set{(\m_1,\ldots,\m_{n-1}):\sum_{ j=0}^{n-1}\m_j\leq M,0\leq\m_j<j}}.
}
Indeed, there is a bijection between the sets on the left and the right, realized by building a permutation iteratively in the order $1,2,\ldots,n$ and placing $k$ so that it occurs in front of $\m_k$ previously allocated elements. 

The number of solutions to $\sum_{ j=0}^{n-1}\m_j\leq M$ is bounded by $\binom{M+n}{n}$ and we get our lower bound on $p$ by a first moment calculation. Thus, if $p\leq p_\e$ then since $M=\Omega(n\log n)$, by assumption,
\[
\Pr(\exists H:\i(H)\leq M)\leq \binom{M+n}{n}p_\e^n\leq \brac{\frac{e(M+n)}{n}\frac{(1-\e)n}{e M}}^n=o(1).
\]
We are seeking an upper bound on the threshold probability for the existence of a particular type of Hamilton cycle and so it is acceptable to restrict our attention to a more restrictive subclass of Hamilton cycles. So we restrict our attention to those cycles for which
\beq{bm}{
\sum_{j=1}^{n}\m_j\leq M\text{ and }0\leq \m_j< \begin{cases} j&j\leq  M/n.\\\frac{M}{n}&j> M/n.\end{cases}
}
To apply Theorem \ref{th3a} we let $\cH$ denote the set of Hamilton cycles $H$ such that \eqref{bm} holds. Note that the constraint $\sum_{j=1}^{n}\m_j\leq M$ is redundant in \eqref{bm}. Thus,
\beq{sumL}{
|\cH|=\bfrac M n !\bfrac{M}{n}^{n-M/n}.
}
We prove below that for $S\subseteq X,\,X=\binom{[n]}{2},|S|=s$,
\beq{sizeS}{
|\upp{S}|\leq \bfrac{M}{n}!\bfrac{M}{n}^{n-M/n-s}.
}
It follows from \eqref{sumL} and \eqref{sizeS} that
\[
\frac{|\cH|}{|\upp{S}|}\geq  \bfrac{M}{en}^s.
\]
The upper bound on the existence threshold in Theorem \ref{th4} now follows from Theorem \ref{th3a} with $r=\binom{n}2$ and $\k=M/en$. To obtain the upper bound in Theorem \ref{th4a} we apply Theorem \ref{thrainbow} in place of Theorem \ref{th3a}.
\paragraph{Proof of \eqref{sizeS}:}  As in the proof of Theorem \ref{th2th2a} the set $S$ defines a collection of vertex disjoint sub-paths $P_1,P_2,\ldots,P_k$ of any Hamilton cycle that contains $S$. Given such a path $P_i$ we let $x_i$ denote the lowest numbered vertex of $P_i$. We see that once we have chosen $\m_{x_1}$, the remaining values $\m_{x_i},i\geq 2$ are {\em constrained} by the edges of the cycle $H$ that are not on $P_i$. Let $V_0$ denote the set of first vertices of $P_1,P_2,\ldots,P_k$ and let $V_1=\bigcup_{i=1}^kV(P_i)\setminus V_0$ and $V_2=V_0\cap [M/n]$. Then,
\beq{up1}{
|\upp{S}|\leq \bfrac{M}{n}^{n-M/n-|V_1|+|V_2|}\brac{\frac{M}{n}-|V_2|}!.\\
}
The second factor in \eqref{up1} follows from the additional fact that given the $\m$-values of the elements of $V_2$ there will be $|V_2|$ values forbidden as a $\mu$-value for the unconstrained elements of $[M/n]$. These forbidden values are those that would insert the element into the interior of a path. 

Now $|V_1|=s$ and Stirling's formula implies that $\brac{\frac{M}{n}-|V_2|}!\leq e^{|V_2|}\bfrac{n}{M}^{|V_2|}$. Plugging these into \eqref{up1} yields \eqref{sizeS}. 
\section{Proof of Theorem \ref{greedy}}
We first write $G_{n,p}=G_1\cup G_2$ where the $G_i$ are independent copies of $G_{n,p_i}$, where $p_1=p/3$ and $1-p=(1-p_1)(1-p_2)$. Note that $p_2\sim 2p/3$.   We begin by constructing a path $P_0$ via the following algorithm: We start with $v_1=1$. Then for $j\geq 1$ we let 
$$\f(j)=\min\set{k\in N:k\notin\set{v_1,v_2,\ldots,v_j}\text{ and }\set{v_j,k}\in E(G_1}$$
and let $v_{j+1}=\f(j)$ i.e. we move from $v_j$ to the lowest index $k$ that has not been previously added to $P_0$. We define $U_j$ by  
\[
U_j=\set{i\leq n:i\notin \set{v_1,v_2,\ldots,v_{j}}}
\]
We stop the process at $j=j_0$ when either $|U_j|=\frac{2\log n}{p_1}$ or $v_{j+1}$ does not exist. We then extend the path $P_0=(v_1,v_2,\ldots,v_{j_0})$ to a Hamilton cycle $H$ using the edges of $G_2$ to create a path through $U=U_{j_0}$. 

Observe first that if $|U|=k>\frac{2\log n}{p_1}$ then $\Pr(j_0\leq n-k)\leq n(1-p_1)^k$. This is because at $j_0$ we find that $v_{j_0}$ has no neighbors in the set of unvisited vertices $U$ and the existence of such edges is unconditioned at this point. Thus w.h.p. 
\beq{b0}{
|U|=\frac{2\log n}{p_1}\text{ and }j_0=n-\frac{2\log n}{p_1}.
}
Next let $j_1=\min\set{j:j\in U}$. Then $\Pr(j_1\leq k)\leq n\E_{j_0}(1-p_1)^{j_0-k}$. This is because $j_1\leq k$ implies that $j_0-k$ non-edges have been reported for vertex $j_1$. So, w.h.p.,
\beq{b1}{
j_1\geq n-\frac{4\log n}{p_1}.
}
Now let $\a_j=|\set{k>j:v_k<v_j}|$ for all $1\leq j\leq n$, so that $\i(H)=\a_1+\a_2+\cdots+\a_{n}$.  If we can complete $(v_1,v_2,\ldots,v_{j_0})$ to a Hamilton cycle $H$, then
\[
\i(H)\leq \a_1+\a_2+\cdots+\a_{j_0}+|U|(n-j_1).
\]
Next we define an approximation $a_j$ to $\a_j$. We let $a_j=|\set{t<v_j:t\notin V_j}|$ for all $j\geq 1$, where $V_j=\set{v_1,v_2,\dots,v_j}.$  Observe that $\a_j\leq a_j$ for $j\leq j_0$.  Moreover,
\beq{aj+}{
\Pr(a_j=k)=(1-p_1)^kp_1\quad\mbox{for }k\geq 0.
}
To see this, observe that the vertex $v_j$ was chosen as the leftmost vertex available to the algorithm at round $j$, and determining this vertex involves querying edges which have not yet been conditioned by the running of the algorithm.  Observe that \eqref{aj+} holds even when conditioning on any previous history of the algorithm.  

So $a_1=0$ and $a_2,a_3,\dots$ is a sequence of independent copies of $Geo(p_1)-1$ where $Geo(p_1)$ is the geometric random variable with probability of success $p_1$.  We thus have:
\beq{geoj}{
\E\left(\sum_{j=0}^{j_0}\a_j\right) \leq \E\left(\sum_{j=0}^{j_0}a_j\right)\leq  \E\left(\sum_{j=0}^{n}a_j\right)\leq n\frac{1-p_1}{p_1}.
}
Moreover, standard concentration arguments give that $\sum_{j=0}^{j_0}\a_j \leq 2n/p_1$ w.h.p. So, if we can complete $(v_1,v_2,\ldots,v_{j_0})$ to a Hamilton cycle $H$, then w.h.p.
\beq{geojqs}{
\i(H)\leq \frac{2n}{p_1}+\frac{16\log^2n}{p_1^2}\leq M,
}
given that
\[
p\geq \frac{100\max\set{K,1}n}{M}.
\]
All that remains it to show that using the edges of $G_2$, we can w.h.p. extend $(v_1,v_2,\ldots,v_{j_0})$ to a Hamilton cycle. For this, we only have to show that there is a Hamilton path in the sub-graph $\G$ of $G_2$ induced by $U$ that can be added to $P_0$ to create a Hamilton cycle through $[n]$.

Let $N=\frac{2\log n}{p_1}$ and observe that $p_2\geq \frac{4\log N}{3N}$. Indeed,
\[
\frac{4\log N}{3Np_2}=\frac{4(\log2+\log\log n+\log 1/p_1)p_1}{3p_2\log n}\lesssim \frac23.
\]
It follows from standard results (see Chapter 6 of \cite{FrKa}) that there is a positive constant $c>0$ such that w.h.p. there are in $\G$, $cN$ vertices $x_1,x_2,\ldots,x_{cN}$ such that for each $i$ there are $cN$ Hamilton paths with one endpoint $x_i$ and otherwise distinct endpoints. So the probability we cannot add a Hamilton path in $\G$ to $P_0$ is at most $2(1-p_2)^{cN}=o(1)$. This completes the proof of Theorem \ref{greedy}.
\section{Comments and open problems}
While Theorems \ref{th1} -- \ref{th2a} are fairly general they can be improved in at least two ways. First we can ask for hitting time versions where we wait for sufficiently many edges and colors. Second and more challenging would be to prove that our random graphs {\em simultaneously} contain all posssible sequences, rather than a specific one.

In the case of Theorem \ref{th3} the bound $s_0=o(\log\log\log n)$ should probably be replaced by $s_0=o(\log\log n)$ in line with the fact that most pairs of vertices in $G_{n,p},p\sim \log n/n$ are $O(\log n/\log\log n)$ apart.

\end{document}